\newtheorem{theorem}{Theorem}
\newtheorem{lemma}[theorem]{Lemma} 
\newtheorem{proposition}[theorem]{Proposition}
\newtheorem{remark}[theorem]{Remark}
\newtheorem{corollary}[theorem]{Corollary}
\newtheorem{assumption}{Assumption}
\newtheorem{definition}{Definition}
\newcommand{\R}{\mathbb{R}}
\newcommand{\Z}{\mathbb{Z}}
\renewcommand{\d}{\mathrm{d}}
\renewcommand{\i}{\mathrm{i}}
\def\II{\mathcal{I}}
\def\cM{\mathcal{M}}
\def\cH{\mathcal{H}}
\def\cF{\mathcal{F}}
\def\cH{\mathcal{H}}
\def\NN{\mathbb{N}}
\def\RR{\mathbb{R}}
\def\supp{\text{\rm supp}}
\def\sinc{\text{\rm sinc}}
\newcommand{\norm}[1]{\left\lVert#1\right\rVert}
\providecommand{\keywords}[1]
{
  \small	
  \textbf{\textit{Keywords---}} #1
}
\providecommand{\classification}[1]
{
  \small	
  \textbf{\textit{Subject classification---}} #1
}
\begin{document}

\title{On Coorbit Fr\'echet Spaces}

\author{
S. Dahlke\footnotemark[1]
\and
F.~De~Mari\footnotemark[2]
\and
E.~De Vito\footnotemark[2]
\and
M. Hansen\footnotemark[5]
\and
G. Steidl\footnotemark[3]
\and
G. Teschke\footnotemark[4]
}

\renewcommand{\thefootnote}{\fnsymbol{footnote}}

\footnotetext[1]{FB12 Mathematik und Informatik, Philipps-Universit\"at
  Marburg, Hans-Meerwein Stra{\ss}e, Lahnberge, 35032 Marburg, Germany, email: dahlke@mathematik.uni-marburg.de} 
\footnotetext[2]{Dipartimento di Matematica, Universit\`a di Genova,  Via
  Dodecaneso 35, Genova,   Italy, email: filippo.demari@unige.it, ernesto.devito@unige.it}
\footnotetext[3]{Institute of Mathematics, TU Berlin, Str. des 17. Juni 136, Berlin, email:steidl@math.tu-berlin.de}
\footnotetext[4]{Institute for Computational Mathematics in Science and
  Technology, Hochschule Neubrandenburg, University of Applied
  Sciences, Brodaer Str. 2, 17033 Neubrandenburg, Germany}
\footnotetext[5]{Department of Mathematics,
  Friedrich-Alexander-Universit\"at Erlangen-N\"urnberg University, Cauerstra{\ss}e 11
91058 Erlangen}

\maketitle

\begin{abstract}
This paper is concerned with a new approach  to coorbit space theory.  Usually, coorbit spaces are
 defined by collecting all distributions for which  the voice transform associated with
a square-integrable group representation   possesses  a certain  decay, usually measured in a Banach space norm 
such as weighted $L_p$-norms.  Unfortunately, in cases  where  the representation does not satisfy certain integrability 
conditions,  one is  faced with a   bottleneck, namely  that the discretization of the coorbit spaces
 is surprisingly difficult.   It turns out that in these cases  the construction of  coorbit  spaces as Fr\'echet spaces is much more convenient  since then 
  atomic decompositions  can be established in a very natural way. 
\end{abstract}

\keywords{Coorbit theory, Fr\'echet spaces, atomic decompositions,  generalized modulation spaces } 

\classification{41A30,46E3,46A04, 42C15 } 
\section{Introduction}
One of the most important tasks in applied mathematics is the analysis of signals. These signals might be given explicitly, e.g., in image analysis, or implicitly,  as solutions of operator equations, say. The first step is always to apply a suitable transformation. By now, an impressive amount  of different  transformations exists, such as the Fourier transform, the Gabor transform, the wavelet transform, or the shearlet transform, just to name a few. Which one to choose   clearly depends on the information one wants to extract from the signal An  important observation is that many of these transforms stem from a square-integrable group representation, e.g., the wavelet transform is associated with the affine group or $ax+b$-group. At this point, the very important coorbit theory comes into play which has been developed by Feichtinger and Gr\"ochenig in a series of papers  \cite{FeiGr0, FeiGr1, FeiGr2, Gr}.  We also refer to \cite{DDGL}, Chapter 3.2 and \cite{B} for an overview.  This theory allows for a unified treatment of the different transformations. Moreover, it provides the construction of  natural smoothness spaces, the coorbit spaces, where smoothness is measured by the decay of the voice transform associated with the group representation. In addition, by discretizing the representation,  atomic decompositions and Banach frames for the coorbit spaces can be obtained. To apply this theory,  besides the square integrability,
 a second very important condition has to be satisfied, namely,  the reproducing kernel associated with the representation has to be  contained in a weighted $L_1$-space. Unfortunately, in some natural cases, this assumption is not satisfied, see, e.g.,  Section \ref{sec:ex}.   Nevertheless, in \cite{DDDLSTV}, it has been shown that  the coorbit theory can also be generalized to this case. Then, the space of distributions from which the coorbit spaces are constructed, has to be modified.   Instead of using the dual of a Banach space, it is  more suitable  to choose the dual space of a Fr\'echet space. By proceeding this way, coorbit spaces can again be constructed.  So far, so good.  But this is only half the truth. When it comes to practical applications, clearly only discrete data can be handled, and therefore a suitable discretization, e.g., an atomic decomposition and/or a frame is needed. For the Fr\'echet setting, this turns out to be surprisingly difficult. This problem has been intensively studied in \cite{DDDSSTV}, but the results look in a certain sense ugly and suboptimal. Then the question arises what might be the reason for this. One conjecture could be:  the test and the distribution spaces are Fr\'echet spaces, but the coorbit spaces are Banach spaces. Maybe it is more natural to define the coorbits also as Fr\'echet spaces?  In this paper, we follow this line of research.  And indeed, it turns out that under some reasonable conditions the existence
 of an atomic decomposition can be established in quite a natural way.  This supports our feeling that we follow a feasible 
 path.

\paragraph{Outline}
The paper is structured in the following way. 
We start by recalling basic facts from coorbit theory on Fr\'echet spaces in Section   \ref{sec:overview}.  Then, in Section 3, we introduce our new concept of coorbit
spaces as Frechet spaces.  Then, in Section \ref{atomic}, we state and prove our main result which  provides atomic decompositions 
in Fr\'echet coorbit spaces.  In Section \ref{sec:ex}, we provide and analyze  two examples of coorbit Fr\'echet spaces,
namely Shannon wavelet spaces and  generalized modulation spaces. Some useful facts are proved in the appendix. 


\section{An Overview}\label{sec:overview}
 
In this section we introduce the notation  and  the   basics of  coorbit theory on Fr\'echet spaces, as introduced in~\cite{DDDLSTV} and then 
summarized in \cite{DDDSSTV}.   
For details the reader is referred to the aforementioned literature.

Throughout this paper $G$ denotes a fixed locally compact second countable
group with a left  Haar measure  and  with modular function $\Delta$.
We  denote by  $\int_G f(x)~\d x$  the integrals with respect to the Haar measure and  by $ L_0(G) $ the space of Borel-measurable functions.
Given $f\in L_0(G)$ the functions $\check{f}$ and $\overline{f}$ are
$$
  \check{f}(x)=f(x^{-1}), \qquad \overline{f}(x)=\overline{f(x)},
$$
and for all $x\in G$ the left and right regular
representations $\lambda$ and $\rho$ act on $f$ as

\begin{alignat*}{2}
  \lambda(x)f\,(y) & =f(x^{-1}y) \qquad &\text{a.e }y\in G,\\
\rho(x)f\,(y) & =f(yx) \qquad &\text{a.e }y\in G.
\end{alignat*}
The convolution $f\ast g$ between $f,g\in  L_0(G) $ is the function
\[
  f\ast g(x)
  = \int_G f(y) g(y^{-1} x) ~dy
  = \int_G f(y) \cdot (\lambda(x)\check{g})(y) ~dy
  \qquad \text{a.e. } x \in G,
\]
provided that the function
$y \mapsto f(y) \cdot (\lambda(x) \check{g})(y)$ is integrable for almost all $x \in G$.


We fix a continuous weight $w : G \to (0, \infty)$ satisfying
\begin{subequations}
  \begin{align}
    w(xy) & \leq w(x)w(y), \label{eq:ControlWeightSubmultiplicative} \\
    w(x) & = w(x^{-1})    \label{eq:ControlWeightSymmetric}
  \end{align}
for all $x,y \in G$. It is worthwhile observing that the above properties imply that
\begin{equation}
  \inf_{x\in G} w(x) \geq 1 \label{eq:ControlWeightBoundedBelow}.
\end{equation}
\end{subequations}

For all $p\in[1,\infty)$ the corresponding weighted Lebesgue space is  the separable Banach space
\[
  L_{p,w}(G) = \left\{f\in  L_0(G)  ~\middle|~ \int_G \lvert{w(x)f(x)}\rvert^p ~dx<\infty\right\}
\]
with norm
\[
  \lVert f \rVert_{L_{{p,w}}}^p = \int_G \lvert{w(x)f(x)}\rvert^p ~dx,
\]
and the obvious modifications for $L_\infty(G)$, which
  however is not separable. When $w \equiv 1$, that is, in the unweighted case, we simply write $L_p(G)$.

With terminology as in \cite{DDDLSTV} we choose,
as a {\em target space} for the coorbit theory, the space
\[
  \mathcal{T}_w =\bigcap_{1<p<\infty} L_{p,w}(G).
\]
We recall some basic properties of $\mathcal{T}_w$ (see Theorem 4.3 of \cite{DDDLSTV}, which is based on results in \cite{damuwe70}).
We endow $\mathcal{T}_w$ with the
(unique) topology such that a sequence $(f_n)_{\in\mathbb{N}}$ in $\mathcal{T}_w$
converges to $0$ if and only if $\lim_{n\to+\infty} \lVert f_n \rVert_{L_{p,w}}=0$
for all $1<p<\infty$. With this topology, $\mathcal{T}_w$ becomes a reflexive
Fr\'echet space. The (an
ti)-linear dual space of
$\mathcal{T}_w$ can be identified with
\[
  \mathcal{U}_w = {\operatorname{span}}\bigcup_{1<q<\infty} L_{q,{w^{-1}}}(G)
\]
under the pairing
\begin{equation}
  \int_G \Phi(x)\overline{f(x)}~dx=\langle{\Phi},{f}\rangle_w,\qquad
  \Phi \in \mathcal{U}_w,\,f\in\mathcal{T}_w.\label{eq:67}
\end{equation}

Take now a (strongly continuous) unitary representation $\pi$ of $G$ acting
on a separable complex Hilbert space $\mathcal{H}$ with scalar product
$\langle{\cdot},{\cdot}\rangle_\mathcal{H}$ linear in the first entry. We assume that $\pi$ is
reproducing, namely that  there exists a vector $u \in \mathcal{H}$ such that the
corresponding voice transform
\[
  Vv(x) = \langle{v},{\pi(x)u}\rangle_\mathcal{H},
  \qquad
  v \in \mathcal{H},\, x \in G,
\]
is an isometry from $\mathcal{H}$ into $L_2(G)$. In this case, $u$ is referred to as an {\it admissible vector}. We observe that this implies that
$V$ is injective, when
$\operatorname{span} \left\{\pi(x)u\right\}_{x \in G}$ is dense in $\mathcal{H}$.
We denote by $K$ the reproducing kernel associated to $u$, namely
\begin{equation}
  K(x)
  =Vu(x)
  =\langle{u},{\pi(x)u}\rangle_\mathcal{H},
  \qquad
  x \in G, v \in \mathcal{H}.
\label{eq:KernelDefinition}
\end{equation}
It is a bounded continuous function and enjoys the fundamental properties:
\begin{subequations}
  \begin{align}
    & \overline{K} = \check{K}, \label{eq:32a} \\
    & \sum_{i,j = 1}^n c_i \overline{c_j} K(x_i^{-1} x_j) \geq 0,
      \qquad
      c_1, \ldots, c_n \in \mathbb{C},\
      x_1,\ldots,x_n\in G, \label{eq:32b} \\
    & K \ast K = K \in L_2(G). \label{eq:32c}
  \end{align}
\end{subequations}

For the remainder of the paper, we will work under the following
basic hypothesis.
\begin{assumption}\label{assume:KernelAlmostIntegrable}
  We assume $K \in \mathcal{T}_w$, {\em i.e},
  \begin{equation}\label{eq:KernelAlmostIntegrable}
    K \in L_{p,w}(G)
    \text{ for all } 1 < p < \infty.
  \end{equation}
\end{assumption}

Observe that if $w^{-1}$ belongs to $L_q(G)$ for some $1 < q < \infty$, then
        H\"older's inequality shows $K \in L_1(G)$, but in general
        $K \notin L_{1,w}(G)$.
       Indeed, in many interesting examples $w$ is independent of one or more
        variables, so that $w^{-1} \not\in L_q(G)$ for all $1 < q < \infty$. Thus, in these circumstances there is no guarantee that $K \in L_1(G)$ and in fact in some known instances (see below) this does not happen.

%

Another key ingredient of the coorbit theory is given by the \emph{test space} $\mathcal{S}_w$, defined  as
\begin{equation}
  \mathcal{S}_w = \left\{
                v\in\mathcal{H}
                ~\middle|~
                Vv \in L_{p,w}(G) \text{ for all } 1 < p < \infty
              \right\},
  \label{eq:43}
\end{equation}
which becomes a locally convex topological vector space with the
family of semi-norms
\begin{equation}
  \lVert{v}\rVert_{p,\mathcal{S}_w} = \lVert{Vv}\rVert_{L_{p,w}}.
  \label{eq:29}
\end{equation}
We recall the main properties of $\mathcal{S}_w$.

\begin{theorem}[Theorem 4.4 of \cite{DDDLSTV}]\label{intersections}
Under the assumption~\eqref{eq:KernelAlmostIntegrable}, the following properties hold true.
  \begin{enumerate}
    \item{} The space $\mathcal{S}_w$ is a reflexive Fr\'echet space,
          continuously and densely embedded in $\mathcal{H}$.

    \item{}\label{2.1b} The representation $\pi$ leaves $\mathcal{S}_w$ invariant and its
         restriction to $\mathcal{S}_w$ is a continuous representation.

    \item{} The space $\mathcal{H}$ is continuously and densely embedded into the 
          (anti)-linear dual $\mathcal{S}'_w$, where both spaces are endowed with the
          weak topology.

    \item{} The restriction of the voice transform $V:\mathcal{S}_w\to \mathcal{T}_w$
          is a topological  isomorphism from $\mathcal{S}_w$ onto the
          closed subspace $\mathcal{M}^{\mathcal{T}_w}$ of $\mathcal{T}_w$, given by
          \[
            \mathcal M^{\mathcal{T}_w} = \left\{f\in \mathcal{T}_w ~\middle|~ f \ast K = f\right\},
          \]
          and it intertwines $\pi$ and $\lambda$.

    \item\label{bar}{} For every $f\in \mathcal{T}_w$, there exists a unique element
          $\pi(f)u\in\mathcal{S}_w$ such that
          \[
            \langle{\pi(f)u},{v}\rangle_\mathcal{H}
            = \int_G f(x) \langle{\pi(x)u},{v}\rangle_\mathcal{H}~dx
            = \int_G f(x) \overline{Vv(x)}~dx,
            \qquad v\in\mathcal{H}.
          \]
          Furthermore, for every $f\in \mathcal{T}_w$ 
          \[
            V\pi(f)u = f \ast K,
          \]
           the map
          \[
            \mathcal{T}_w \ni f \mapsto \pi(f)u \in \mathcal{S}_w
          \]
          is continuous and its restriction to $\mathcal M^{\mathcal{T}_w}$
          is the inverse of $V$.
 \end{enumerate}
\end{theorem}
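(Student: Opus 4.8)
The whole statement is obtained by transporting everything through the voice transform $V$, exploiting three facts: $V$ is an isometry of $\mathcal{H}$ onto a closed subspace of $L_2(G)$ satisfying the reproducing identity $Vv\ast K = Vv$ for all $v\in\mathcal{H}$; $V$ intertwines $\pi$ with the left regular representation, $V\pi(x)v = \lambda(x)Vv$ (a one-line computation from $\pi(x)^\ast\pi(y)=\pi(x^{-1}y)$); and a weighted Young inequality adapted to the fact that $K$ lies in every $L_{p,w}(G)$ with $p>1$, even though in general $K\notin L_{1,w}(G)$. Concretely, submultiplicativity and symmetry of $w$ give $\lvert (f\ast g)(x)\rvert\, w(x)\le (\lvert wf\rvert\ast\lvert wg\rvert)(x)$, so for any fixed $p\in(1,\infty)$ one picks the single exponent $q=2p/(p+1)\in(1,2)$ and the unweighted Young inequality yields $\lVert f\ast K\rVert_{L_{p,w}}\le \lVert f\rVert_{L_{q,w}}\lVert K\rVert_{L_{q,w}}$. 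Since $q>1$, Assumption~\ref{assume:KernelAlmostIntegrable} applies, so $f\mapsto f\ast K$ maps $\mathcal{T}_w$ continuously into itself; in particular $\mathcal{M}^{\mathcal{T}_w}=\{f\in\mathcal{T}_w: f\ast K=f\}$, being the kernel of the continuous map $f\mapsto f-f\ast K$, is a closed subspace of $\mathcal{T}_w$.

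Next I would prove item~\ref{bar} and, simultaneously, the surjectivity in item~4. Because $w\ge 1$ by \eqref{eq:ControlWeightBoundedBelow}, $\mathcal{T}_w\subset L_2(G)$, so for $f\in\mathcal{T}_w$ the antilinear functional $v\mapsto\int_G f(x)\overline{Vv(x)}\,\d x$ is bounded on $\mathcal{H}$ (dominated by $\lVert f\rVert_{L_2}\lVert v\rVert_\mathcal{H}$, using that $V$ is an $L_2$-isometry), and Riesz representation yields a unique $\pi(f)u\in\mathcal{H}$ with the stated defining property. Computing its voice transform, using $V\pi(x)u=\lambda(x)K$ and $\overline K=\check K$ from \eqref{eq:32a}, gives $V\pi(f)u=f\ast K$. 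By the first paragraph $f\ast K\in\mathcal{T}_w$, hence $\pi(f)u\in\mathcal{S}_w$, and $\lVert\pi(f)u\rVert_{p,\mathcal{S}_w}=\lVert f\ast K\rVert_{L_{p,w}}\le\lVert K\rVert_{L_{q,w}}\lVert f\rVert_{L_{q,w}}$ shows $f\mapsto\pi(f)u$ is continuous from $\mathcal{T}_w$ to $\mathcal{S}_w$. When $f\in\mathcal{M}^{\mathcal{T}_w}$ this reads $V\pi(f)u=f$, so $V(\mathcal{S}_w)\supseteq\mathcal{M}^{\mathcal{T}_w}$; conversely $Vv\ast K=Vv$ shows $V(\mathcal{S}_w)\subseteq\mathcal{M}^{\mathcal{T}_w}$, and $\langle\pi(Vv)u,v'\rangle_\mathcal{H}=\langle Vv,Vv'\rangle_{L_2}=\langle v,v'\rangle_\mathcal{H}$ shows $\pi(Vv)u=v$. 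Thus $V|_{\mathcal{S}_w}:\mathcal{S}_w\to\mathcal{M}^{\mathcal{T}_w}$ is a bijection, continuous (indeed isometric on each defining seminorm, by the very definition \eqref{eq:29}) with continuous inverse $f\mapsto\pi(f)u$ — a topological isomorphism — and it intertwines $\pi$ and $\lambda$; this is item~4 together with item~\ref{bar}.

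Items 1--3 then follow formally. For item~1: a closed subspace of the reflexive Fr\'echet space $\mathcal{T}_w$ is again reflexive Fr\'echet, so $\mathcal{S}_w$ is, via the isomorphism just established; the inclusion $\mathcal{S}_w\hookrightarrow\mathcal{H}$ is continuous since $\lVert v\rVert_\mathcal{H}=\lVert Vv\rVert_{L_2}\le\lVert v\rVert_{2,\mathcal{S}_w}$; and it is dense because each $\pi(x)u$ lies in $\mathcal{S}_w$ (from $\lVert\lambda(x)K\rVert_{L_{p,w}}\le w(x)\lVert K\rVert_{L_{p,w}}$, a change of variables plus \eqref{eq:ControlWeightSubmultiplicative}) while $\operatorname{span}\{\pi(x)u\}_{x\in G}$ is dense in $\mathcal{H}$ by injectivity of $V$. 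For item~2: the same translation estimate gives $\pi(x)\mathcal{S}_w\subseteq\mathcal{S}_w$, and strong continuity of $x\mapsto\pi(x)v$ into $\mathcal{S}_w$ reduces, through $V$, to continuity of $x\mapsto\lambda(x)g$ in each $L_{p,w}(G)$, which is standard for a continuous submultiplicative weight (approximate $g$ by $C_c(G)$-functions and control the weight by the submultiplicative bound). For item~3: $v\mapsto\langle\,\cdot\,,v\rangle_\mathcal{H}$ embeds $\mathcal{H}$ continuously into $\mathcal{S}'_w$ (dominated by $\lVert v\rVert_\mathcal{H}\lVert\,\cdot\,\rVert_{2,\mathcal{S}_w}$), it is injective because $\mathcal{S}_w$ is dense in $\mathcal{H}$, and it is weak-$\ast$ dense because $\mathcal{S}_w\subseteq\mathcal{H}$, so the annihilator of $\mathcal{H}$ in $\mathcal{S}_w$ is $\{0\}$; one concludes by the bipolar theorem.

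The main obstacle is the weighted Young step: the usual coorbit estimate $\lVert f\ast K\rVert_{L_{p,w}}\le\lVert f\rVert_{L_{p,w}}\lVert K\rVert_{L_{1,w}}$ is unavailable here since $K\notin L_{1,w}(G)$ in general, and one must instead trade integrability, using $K\in L_{q,w}(G)$ for exponents $q$ close to $1$. A secondary technical point is checking that $\pi$ is a \emph{continuous} representation of $G$ on the Fr\'echet space $\mathcal{S}_w$ (continuity of left translation on the weighted Lebesgue spaces, locally uniformly), and a third is phrasing item~3 with the correct weak topologies so that the density assertion is exactly a bipolar-theorem statement. Everything else is bookkeeping with the intertwining relation $V\pi(x)=\lambda(x)V$ and the isometry property of $V$.
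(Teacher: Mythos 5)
The paper does not prove this theorem: it is imported verbatim as Theorem~4.4 of \cite{DDDLSTV}, so there is no internal proof to compare against. Your argument is correct and follows essentially the same route as that reference (and as the paper's own machinery, cf.\ Lemma~\ref{LemmageneralizedYoung}): transport everything through $V$ using $V\pi(x)=\lambda(x)V$ and the reproducing identity, and replace the unavailable estimate $\lVert f\ast K\rVert_{L_{p,w}}\le\lVert f\rVert_{L_{p,w}}\lVert K\rVert_{L_{1,w}}$ by a Young inequality with both exponents close to $1$, which is exactly what Assumption~\ref{assume:KernelAlmostIntegrable} is designed to permit.
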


Here and in what follows the notation $\pi(f)u$ is motivated by the fact that any function $f\in L_1(G)$ defines a bounded operator $\pi(f)$ on $\mathcal{H}$, which is weakly given by
  \[
    \langle{\pi(f) v},{v'}\rangle_\mathcal{H} = \int_G f(x) \langle{\pi(x)v},{v'}\rangle_\mathcal{H} ~dx,
    \qquad v,v'\in\mathcal{H},
  \]
  see for example Sect. 3.2 of \cite{fol95}. However, if $f\not\in L_1(G)$,
  then in general $\pi(f)v$ is well defined only if $v=u$, where
  $u$ is an admissible vector for the representation $\pi$.


Recalling that the (anti-)dual of $\mathcal{T}_w$ is $\mathcal{U}_w$ under the
  pairing~\eqref{eq:67}, we denote by $\,^t\!{V}$ the contragradient  map
  $\,^t\!{V}: \mathcal{U}_w\to \mathcal{S}'_w$ given by
\[
\langle{\,^t\!{V}\Phi },{v}\rangle_{\mathcal{S}_w}= \langle{\Phi},{Vv}\rangle_{w},\qquad
\Phi\in\mathcal{U}_w,\, v\in\mathcal{S}_w.
\]

As usual, we extend the voice transform from $\mathcal{H}$ to the (anti-)dual $\mathcal{S}'_w$
of $\mathcal{S}_w$, where $\mathcal{S}'_w$ plays the role of the space of distributions.
For all $T \in \mathcal{S}_w'$ we define the {\it extended voice transform} of $T$ by
\begin{equation}
  V_e T(x) = \langle{T},{\pi(x)u}\rangle_{\mathcal{S}_w},
  \qquad x\in G,
\label{EVT2}
\end{equation}
which is a continuous function on $G$ by item~\ref{2.1b}) of the previous
theorem and $\langle{\cdot},{\cdot}\rangle_{\mathcal{S}_w}$ denotes the pairing between $\mathcal{S}_w$.
Here $\mathcal{S}'_w$, whereas $\langle{\cdot},{\cdot}\rangle_w$ is the pairing between $\mathcal{T}_w$ and
$\mathcal{U}_w$.

We summarize the main properties of the extended voice transform in
the next theorem.

\begin{theorem}[Theorem 4.4 of \cite{DDDLSTV}]\label{intersections-1}
  Under assumption~\eqref{eq:KernelAlmostIntegrable}, the following facts hold true.
  \begin{enumerate}
    \item{} For every $\Phi \in \mathcal{U}_w$ there exists a unique element
          $\pi(\Phi)u\in\mathcal{S}_w'$ such that
          \[
            \langle{\pi(\Phi)u},{v}\rangle_{\mathcal{S}_w}
            = \int_G \Phi(x) \langle{\pi(x)u},{v}\rangle_{\mathcal{H}}\,dx
            = \int_G \Phi(x) \overline{Vv(x)}\,dx,
            \qquad v \in \mathcal{S}_w.
          \]
          Furthermore, it holds that
          \[
            V_e \pi(\Phi)u = \Phi \ast K.
          \]

  \item{}\label{2.2b} For all $T\in\mathcal{S}_w'$ the extended voice transform  $V_eT$ is in
        $\mathcal{U}_w$ and satisfies
        \begin{align}
          & V_e T = V_e T \ast K, \label{eq:ExtendedVoiceTransformReproducing} \\
          & \langle{T},{v}\rangle_{\mathcal{S}_w} = \langle{V_e T},{V v}\rangle_w,
          \qquad v \in \mathcal{S}_w.
          \label{eq:ExtendedVoiceTransformDuality}
        \end{align}

  \item{} The extended voice transform $V_e$ is injective, it is continuous
        from $\mathcal{S}_w'$ into $\mathcal{U}_w$ (when both spaces are endowed with the
        strong topology), its range is the closed subspace
        \begin{equation}
          \mathcal M^{\mathcal{U}_w}
          = \left\{\Phi \in \mathcal{U}_w ~\middle|~ \Phi \ast K = \Phi\right\}
          = \operatorname{span}
              \bigcup_{p \in (1,\infty)}
                \mathcal{M}^{L_{p,w}(G)}
          \subset L_{\infty,{w^{-1}}}(G)
          \label{eq:38}
        \end{equation}
      and it intertwines the contragradient representation of $\pi_{|\mathcal{S}_w}$ and
      $\lambda_{|\mathcal{U}_w}$.

  \item\label{sotto}{} The map
        \[
          \mathcal M^{\mathcal{U}_w} \ni \Phi \mapsto \pi(\Phi) u \in \mathcal{S}_w'
        \]
        is the left inverse of $V_e$ and coincides with the restriction of the map
        $\,^t\!{\,V}$ to $\mathcal M^{\mathcal{U}_w}$, namely
        \begin{equation}
          V_e( \,^t\!{\,V}\Phi)
          = V_e\pi(\Phi)u
          = \Phi,
          \qquad \Phi \in \mathcal M^{\mathcal{U}_w}.
          \label{eq:39}
        \end{equation}

  \item{} Concerning the double inclusion $\mathcal{S}_w\hookrightarrow\mathcal{H}\hookrightarrow\mathcal{S}'_w$, we have
        \[
          \mathcal{S}_w
          = \left\{T \in \mathcal{S}_w' ~\middle|~ V_e T \in \mathcal{T}_w\right\}
          = \left\{\pi(f)u ~\middle|~ f\in \mathcal{M}^{\mathcal{T}_w} \right\}.
        \]
  \end{enumerate}
\end{theorem}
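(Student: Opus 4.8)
The plan is to derive all five assertions from the ``predual'' description of $\mathcal{S}_w$ in Theorem~\ref{intersections} by transposition, systematically exploiting the idempotent Hermitian kernel $K$ to replace Hahn--Banach extensions by honest projections; apart from the properties~\eqref{eq:32a} and~\eqref{eq:32c} of $K$, the structure of $\mathcal{T}_w$ and $\mathcal{U}_w$, and some elementary weighted convolution estimates, no new ingredient is needed. For the first assertion, given $\Phi\in\mathcal{U}_w$ I would simply \emph{define} $\pi(\Phi)u:=\,^t\!{V}\Phi\in\mathcal{S}_w'$; unwinding the definition of the contragradient map together with the pairing~\eqref{eq:67} and the identity $\overline{Vv(x)}=\langle{\pi(x)u},{v}\rangle_{\mathcal H}$ reproduces verbatim the asserted weak formula, and uniqueness is automatic because an element of $\mathcal{S}_w'$ is determined by its restriction to $\mathcal{S}_w$. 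To evaluate $V_e\pi(\Phi)u$ at $x$ one tests against $v=\pi(x)u$: since $V\pi(x)u(y)=\langle{\pi(x)u},{\pi(y)u}\rangle_{\mathcal H}=K(x^{-1}y)$ and $\overline K=\check K$ by~\eqref{eq:32a}, the defining integral collapses to $\int_G\Phi(y)\,K(y^{-1}x)\,dx=\Phi\ast K(x)$.

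Next, for $T\in\mathcal{S}_w'$, pulling $T$ back along the topological isomorphism $V\colon\mathcal{S}_w\to\mathcal M^{\mathcal{T}_w}$ of Theorem~\ref{intersections} and extending by Hahn--Banach to a continuous functional on $\mathcal{T}_w$ produces $\Phi\in\mathcal{U}_w$ with $\langle{T},{v}\rangle_{\mathcal{S}_w}=\langle{\Phi},{Vv}\rangle_w$ for all $v\in\mathcal{S}_w$; hence $T=\,^t\!{V}\Phi=\pi(\Phi)u$ and, by the first assertion, $V_eT=\Phi\ast K$. A weighted Young/H\"older estimate --- essentially $L_{q,w^{-1}}\ast L_{q',w}\subseteq L_{\infty,w^{-1}}$ for conjugate exponents, which uses only~\eqref{eq:ControlWeightSubmultiplicative} and~\eqref{eq:ControlWeightSymmetric} --- gives $\Phi\ast K\in L_{\infty,w^{-1}}\cap\mathcal{U}_w$, and $(\Phi\ast K)\ast K=\Phi\ast(K\ast K)=\Phi\ast K$ by~\eqref{eq:32c}, so $V_eT\in\mathcal M^{\mathcal{U}_w}$, which is~\eqref{eq:ExtendedVoiceTransformReproducing}. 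Self-adjointness of right-convolution by $K$ relative to the pairing~\eqref{eq:67} (a short computation from $\overline K=\check K$) together with $Vv\ast K=Vv$ yields $\langle{V_eT},{Vv}\rangle_w=\langle{\Phi\ast K},{Vv}\rangle_w=\langle{\Phi},{Vv}\rangle_w=\langle{T},{v}\rangle_{\mathcal{S}_w}$, i.e.~\eqref{eq:ExtendedVoiceTransformDuality}, whence injectivity of $V_e$ is immediate. Decomposing a general $\Phi\in\mathcal{U}_w$ with $\Phi\ast K=\Phi$ as $\Phi=\sum_i\Phi_i\ast K$ along $\mathcal{U}_w=\operatorname{span}\bigcup_q L_{q,w^{-1}}$ and reusing the same estimate gives the span description and the inclusion $\mathcal M^{\mathcal{U}_w}\subseteq L_{\infty,w^{-1}}$ in~\eqref{eq:38}, while the range of $V_e$ is all of $\mathcal M^{\mathcal{U}_w}$ because $V_e\pi(\Phi)u=\Phi\ast K=\Phi$ whenever $\Phi\ast K=\Phi$. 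Strong continuity of $V_e\colon\mathcal{S}_w'\to\mathcal{U}_w$ I would obtain by identifying $V_e$ with a realization of the transpose of the continuous map $\mathcal{T}_w\ni f\mapsto\pi(f)u\in\mathcal{S}_w$ of Theorem~\ref{intersections}; this reduces to checking $\langle{T},{\pi(f)u}\rangle_{\mathcal{S}_w}=\int_G\overline{f(x)}\,V_eT(x)\,dx$ by pulling $T$ inside the vector-valued integral defining $\pi(f)u$, after which strong continuity follows since transposes of continuous linear maps between reflexive Fr\'echet spaces are strongly continuous. Finally, the intertwining property is a one-line computation: applying the contragradient of $\pi(x)$ to $T$ and then $V_e$ produces $y\mapsto\langle{T},{\pi(x^{-1}y)u}\rangle_{\mathcal{S}_w}=V_eT(x^{-1}y)=(\lambda(x)V_eT)(y)$.

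The last two assertions are then formal. From~\eqref{eq:ExtendedVoiceTransformDuality} one has $\langle{\pi(V_eT)u},{v}\rangle_{\mathcal{S}_w}=\langle{V_eT},{Vv}\rangle_w=\langle{T},{v}\rangle_{\mathcal{S}_w}$ for every $v\in\mathcal{S}_w$, so $\Phi\mapsto\pi(\Phi)u$ is a left inverse of $V_e$; it is the restriction of $\,^t\!{V}$ to $\mathcal M^{\mathcal{U}_w}$ by the definition chosen above, and~\eqref{eq:39} merely restates $V_e\pi(\Phi)u=\Phi\ast K=\Phi$ on $\mathcal M^{\mathcal{U}_w}$. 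For the remaining description, if $T\in\mathcal{S}_w'$ satisfies $V_eT\in\mathcal{T}_w$, then~\eqref{eq:ExtendedVoiceTransformReproducing} gives $V_eT\in\mathcal M^{\mathcal{T}_w}$, so Theorem~\ref{intersections} supplies $v\in\mathcal{S}_w$ with $Vv=V_eT=V_ev$, and injectivity of $V_e$ forces $T=v\in\mathcal{S}_w$; the converse inclusion is the definition of $\mathcal{S}_w$, and the identification with $\{\pi(f)u~|~f\in\mathcal M^{\mathcal{T}_w}\}$ is read off directly from Theorem~\ref{intersections}.

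The step I expect to cost the most effort is the strong continuity of $V_e$ together with the weighted convolution inequalities behind~\eqref{eq:38}: the former requires the interchange of the distribution $T$ with a vector-valued integral in the Fr\'echet space $\mathcal{S}_w$ and the stability of strong continuity under transposition for the non-normable duals at hand, while the latter repeatedly leans on the interplay of submultiplicativity and symmetry of $w$ with the kernel identities~\eqref{eq:32a} and~\eqref{eq:32c}; it is cleanest to isolate both as auxiliary lemmas, the convolution estimates presumably belonging among the appendix's ``useful facts''.
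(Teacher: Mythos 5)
This theorem is imported verbatim from Theorem~4.4 of \cite{DDDLSTV}; the paper gives no proof of its own, so there is nothing internal to compare against. That said, your reconstruction follows what is essentially the standard (and surely the intended) duality argument, and it is sound: defining $\pi(\Phi)u:={}^t\!V\Phi$, obtaining $V_e\pi(\Phi)u=\Phi\ast K$ by testing against $\pi(x)u$ and using $\overline K=\check K$, producing $\Phi$ for a given $T$ via the isomorphism $V\colon\mathcal{S}_w\to\mathcal M^{\mathcal{T}_w}$ plus Hahn--Banach, and then deriving the reproducing and reconstruction formulas from $K\ast K=K$ and the self-adjointness of right convolution by $K$ under the pairing \eqref{eq:67} is exactly the right skeleton, and the last two items do become formal consequences as you say. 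Two points you leave implicit deserve explicit treatment. First, to conclude $V_eT\in\mathcal{U}_w$ (and not merely $V_eT\in L_{\infty,w^{-1}}$, which need not lie in $\mathcal{U}_w$) you must apply the weighted Young inequality $L_{q,w^{-1}}\ast L_{p,w}\subseteq L_{r,w^{-1}}$ with $1+1/r=1/q+1/p$ and $p$ chosen close to $1$ so that $r<\infty$; this is precisely where Assumption~\ref{assume:KernelAlmostIntegrable}, i.e.\ $K\in L_{p,w}$ for \emph{all} $p>1$, is indispensable, and it is the analogue of Lemma~\ref{LemmageneralizedYoung} with $m=w^{-1}$. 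Second, the closedness of $\mathcal M^{\mathcal{U}_w}$ asserted in \eqref{eq:38} is not addressed; it follows from your own setup by observing that $\Phi\mapsto\Phi\ast K$ is the transpose of the continuous map $f\mapsto f\ast K$ on $\mathcal{T}_w$, hence strongly continuous on $\mathcal{U}_w$, so that $\mathcal M^{\mathcal{U}_w}$ is the kernel of the continuous map $\Phi\mapsto\Phi-\Phi\ast K$. With these two additions (and the harmless typo $dx$ for $dy$ in your first display) the argument is complete.
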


Item~\ref{2.2b}) of the previous theorem states that
the voice transform of any distribution $T\in\mathcal{S}_w'$ satisfies the
reproducing formula~\eqref{eq:ExtendedVoiceTransformReproducing} and uniquely
defines the distribution $T$ by means of the reconstruction
formula~\eqref{eq:ExtendedVoiceTransformDuality}, i.e.
\[
  T = \int_G \langle{T},{\pi(x)u}\rangle_{\mathcal{S}_w} \pi(x)u~dx,
\]
where the integral is a Dunford-Pettis integral with respect to the duality
between $\mathcal{S}_w$ and $\mathcal{S}_w'$, see, for example,
Appendix~3 of \cite{fol95}.

\section{Fr\'echet Coorbit Spaces}

In this section,  we present our new construction of coorbit spaces as Fr\'echet spaces.  For simplicity, we will  restrict ourselves 
to the case where  the  coorbit spaces are related with intersections of weighted $L_p$-spaces.  In particular, we will show
that the most important result  in the realm of coorbit space theory, the Correspondence Principle, 
 also holds in our case, see Theorem \ref{Ycoorbit} below. 
 
We  fix  a $w$-moderate weight $m$, i.e. a continuous function $m : G \to (0, \infty)$ such that
\begin{align}
  &m(xy) \leq w(x) \cdot m(y) \label{eq:ctrl1}\\
&m(xy) \leq m(x) \cdot w(y)\label{eq:ctrl2}.
\end{align}
%
The above conditions are equivalent to
\begin{align*}
m(xyz) \leq w(x)\cdot m(y)\cdot w(z) \quad \text{for all}~x,y,z\in G
\end{align*}
up to the constant $w(e)$. It is worth observing that if $m$ is a $w$-moderate weight on $G$, then so is $m^{-1}$, see Lemma~4.1 in~\cite{DDDSSTV}.
%
%

Appealing again to the terminology used in \cite{DDDLSTV}, we
choose as a {\em model space} $Y$ for the coorbit space theory
the Fr\'echet space (see Theorem~\ref{Yprop} below)
\[
Y =\bigcap_{1<p<\infty} L_{p,m}(G).
\]
instead of  the single Banach space $L_{r,m}(G)$ as it is done in \cite{DDDSSTV}. The space $Y$ is endowed with the initial topology that makes all the inclusions $\iota_p\colon Y\hookrightarrow L_{p,m}(G)$ continuous. It is worth pointing out that the notation ${\mathcal T}_m$ to denote $Y$ would be improper because $m$ is neither necessarily submultiplicative nor symmetric. In particular, the general properties enjoyed by ${\mathcal T}_w$  are not automatically satisfied by $Y$,  but hold in the sense clarified in the next result. 
\begin{theorem}\label{Yprop}
  \begin{enumerate}
    \item{} The space $Y$ is a Fr\'echet space  continuously embedded into $L_0(G)$ via the natural inclusion 
    $j\colon Y\hookrightarrow L_0(G)$. Furthermore
 \[
    Y'=Y^\sharp=\bigcup_{1<p<\infty} L_{p,m^{-1}}(G),
\]
where  $Y^\sharp=\left\{g\in L_0(G)~\middle| gj(f)\in L^1(G) \text{ for all }f\in Y\right\}$ is the K\"othe dual of $Y$.
  \item{} The left regular representation leaves $Y$ invariant and its restriction to $Y$  is a continuous representation.
 \item{} For all $f\in\mathcal{T}_w$ and $F\in Y$, it holds that $fF\in L_1(G)$.
  \end{enumerate}
\end{theorem}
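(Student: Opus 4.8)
The plan is to establish the three assertions of Theorem~\ref{Yprop} in turn, drawing on the classical theory of Köthe function spaces as developed in \cite{damuwe70} (which already underlies the analogous statements for $\mathcal{T}_w$ in \cite{DDDLSTV}). For item (1), I would first observe that $Y=\bigcap_{1<p<\infty}L_{p,m}(G)$ is a countable intersection: it suffices to take $p$ ranging over a sequence $p_n\downarrow 1$ together with a sequence $q_n\uparrow\infty$, since $L_{p,m}\cap L_{q,m}\subseteq L_{r,m}$ for $p\le r\le q$ by interpolation (log-convexity of $L_p$-norms with respect to a fixed measure $m(x)^p\,dx$ — though here the weight changes with $p$, so one must instead interpolate against the measure $dx$ after absorbing the weight, or simply use that $|mf|^r\le\max(|mf|^p,|mf|^q)$ pointwise). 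Hence $Y$ is a countable intersection of Banach spaces and carries a natural Fréchet topology given by the seminorms $\|\cdot\|_{L_{p_n,m}}$; completeness follows because a Cauchy sequence in $Y$ is Cauchy in each $L_{p_n,m}$, has a limit there, and these limits agree $a.e.$ The continuous embedding $j\colon Y\hookrightarrow L_0(G)$ is immediate since convergence in any single $L_{p,m}$ already implies convergence in measure on sets of finite measure, hence in $L_0(G)$. For the duality statement, the inclusion $\bigcup_p L_{p,m^{-1}}(G)\subseteq Y^\sharp$ is just Hölder's inequality ($g\in L_{p',m^{-1}}$, $f\in L_{p,m}$ gives $gf=(m^{-1}g)(mf)\in L_1$); the reverse inclusion and the identification $Y'=Y^\sharp$ is the substantive part, and this is exactly the content of the Köthe-duality theory for perfect (Köthe) function spaces — one checks $Y$ is solid and has the Fatou property, so $Y^{\sharp\sharp}=Y$, and then the topological dual of the Fréchet space $Y$ coincides with its Köthe dual with the obvious pairing. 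I would cite \cite{damuwe70} and the parallel argument in Theorem~4.3 of \cite{DDDLSTV} rather than reproving this.

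For item (2), invariance of $Y$ under the left regular representation $\lambda$ follows from the $w$-moderateness of $m$: for $F\in L_{p,m}(G)$ and $x\in G$,
\[
  \|\lambda(x)F\|_{L_{p,m}}^p
  =\int_G |m(y)F(x^{-1}y)|^p\,dy
  =\int_G |m(xz)F(z)|^p\,dz
  \le w(x)^p\int_G |m(z)F(z)|^p\,dz,
\]
using \eqref{eq:ctrl1}, so $\|\lambda(x)F\|_{L_{p,m}}\le w(x)\|F\|_{L_{p,m}}$ for every $p$, whence $\lambda(x)Y\subseteq Y$ with each seminorm controlled by $w(x)$. Strong continuity of the restricted representation then reduces, via this uniform-in-$x$ (locally) bound and the density of $C_c(G)$ in each $L_{p,m}(G)$, to the standard fact that $x\mapsto\lambda(x)\varphi$ is continuous in $L_{p,m}$-norm for $\varphi\in C_c(G)$; a standard $3\varepsilon$-argument upgrades this to all of $Y$ with respect to each seminorm simultaneously, which is exactly continuity for the Fréchet topology. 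This mirrors item (2) of Theorem~\ref{intersections} and Lemma~4.1 of \cite{DDDSSTV}.

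Item (3) is the crux and the step I expect to require the most care. We must show $fF\in L_1(G)$ whenever $f\in\mathcal{T}_w=\bigcap_{1<p<\infty}L_{p,w}(G)$ and $F\in Y=\bigcap_{1<p<\infty}L_{p,m}(G)$. The natural idea is: write $fF=(wf)(m^{-1}F)\cdot(w^{-1}m\cdot m^{-1})$... more cleanly, $fF = (w f)\,(w^{-1} F)$, and try to put $wf$ in some $L_{p,?}$ and $w^{-1}F$ in the conjugate space. But $w^{-1}F$ need not lie in any $L_{q,1}$ unless $F$ decays against $w$. The right approach uses $w$-moderateness of $m$ in the form $m^{-1}(x)\le w(x)\cdot m^{-1}(e)\cdot\ldots$; more precisely from \eqref{eq:ctrl2} with $y=e$ we get $m(x)\le m(e)w(x)$, equivalently $w(x)^{-1}\le m(e)m(x)^{-1}$, hence $w(x)^{-1}\lesssim m(x)^{-1}$ — wait, that is only one-sided and in the wrong direction for pairing. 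Instead: $f\in L_{p,w}$ means $wf\in L_p$; $F\in L_{p',m}$ means $mF\in L_{p'}$; then $\int|fF| = \int |wf|\cdot|w^{-1}m^{-1}|\cdot|mF|$, and the obstruction is the extra factor $w^{-1}m^{-1}$, which is \emph{bounded} precisely when $wm\ge c>0$ — and indeed \eqref{eq:ControlWeightBoundedBelow} gives $w\ge 1$ and $w$-moderateness gives $m(x)\ge m(e)/w(x^{-1})=m(e)/w(x)$, so $w(x)m(x)\ge m(e)>0$, i.e. $w^{-1}m^{-1}\le m(e)^{-1}$ is bounded. Therefore $\int|fF|\le m(e)^{-1}\|wf\|_{L_p}\|mF\|_{L_{p'}}=m(e)^{-1}\|f\|_{L_{p,w}}\|F\|_{L_{p',m}}<\infty$ by Hölder for any conjugate pair $1<p,p'<\infty$. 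So the real content of item (3) is recording this lower bound $wm\gtrsim 1$ (a consequence of \eqref{eq:ControlWeightBoundedBelow} and \eqref{eq:ctrl2}) and then invoking Hölder; I would present it in that order, flagging the lower bound as the key observation and the rest as routine. This bound is essentially Lemma~4.1 of \cite{DDDSSTV} applied to $m^{-1}$.
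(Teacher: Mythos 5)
Your proposal is correct and follows essentially the same route as the paper: item (1) by reduction to Theorem~4.3 of \cite{DDDLSTV} with $w$ replaced by $m$, item (2) by the identical change-of-variables estimate using \eqref{eq:ctrl1}, and item (3) by the same key inequality $m(e)=m(xx^{-1})\leq m(x)w(x^{-1})=m(x)w(x)$ obtained from \eqref{eq:ctrl2} and the symmetry of $w$. The only cosmetic difference is that in item (3) you unwind the conclusion as a direct H\"older estimate $\int|fF|\leq m(e)^{-1}\lVert f\rVert_{L_{p,w}}\lVert F\rVert_{L_{p',m}}$, whereas the paper packages the same inequality as the embedding $Y\subseteq\mathcal{U}_w=\mathcal{T}_w^{\sharp}$ and invokes the definition of the K\"othe dual.
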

\begin{proof} {\it 1.} This is a consequence of Theorem~4.3 in~\cite{DDDLSTV} with $w$ replaced by $m$ because the proof does not depend on the properties  \eqref{eq:ControlWeightSubmultiplicative} and \eqref{eq:ControlWeightSymmetric} of $w$.

{\it 2.} This is a consequence of Theorem~4.3 together with Lemma~4.2  in~\cite{DDDLSTV}. The proof of the latter is based on the sub-multiplicative property \eqref{eq:ControlWeightSubmultiplicative} of $w$, which may be replaced by \eqref{eq:ctrl1}, namely
\[
\int_G|m(y)f(x^{-1}y)|^p\,\d y=\int_G|m(xy)f(y)|^p\,\d y\leq w(x)^p\int_G|m(y)f(y)|^p\,\d y.
\]

{\it 3.} Theorem~4.3 in~\cite{DDDLSTV} shows that $Y\subseteq{\mathcal U}_w={\mathcal T}_w^\sharp$. Further, the symmetry property \eqref{eq:ControlWeightSymmetric} gives
\[
m(e)=m(xx^{-1})\leq m(x)w(x^{-1})=m(x)w(x)
\]
and hence
\[
w^{-1}(x)\leq\frac{m(x)}{m(e)}.
\]
The definition of K\"othe dual then entails  that  
$fF\in L_1(G)$ for every $f\in {\mathcal T}_w$ and every $F\in Y$.
\end{proof}

Following the usual steps of the theory, the coorbit space corresponding to $Y$ is defined by
\begin{equation}
 \operatorname{Co}(Y) = \left\{T\in \mathcal{S}'_w ~\middle|~ V_e T \in Y \right\},
 \label{eq:21}
\end{equation}
i.e.,   $\operatorname{Co}(Y)$ is the  { vector} space
$$  \operatorname{Co}(Y) = \left\{T\in \mathcal{S}'_w ~\middle|~ V_e T \in  \bigcap_{ 1<p<\infty}L_{p,m}(G) \right\}=\bigcap_{ 1<p<\infty}
 \operatorname{Co}(L_{p,m}(G)).$$

We summarize the main properties of $\operatorname{Co}(Y)$ in the following result, which is an adaptation of Proposition~4.1 of~\cite{DDDSSTV} to the present setup.
\begin{theorem}\label{Ycoorbit}
  \begin{enumerate}
    \item{} The image under the extended voice transform of the coorbit space is
    \[
    V_e({\operatorname{Co}(Y)} )=\left\{F\in Y~\middle|~ F*K=K\right\},
    \]
 a closed subspace of $Y$ denoted  ${\mathcal M}^Y $. 
  \item{} For all $f\in{\mathcal M}^Y$, the Fourier  transform $\pi(f)u$  given  as in Theorem~\eqref{intersections-1} by
  \[
  \langle\pi(f)u,v\rangle_{{\mathcal S}_w}=\int_G f(x) \langle\pi(x)(u),v\rangle_{\mathcal H}\,\d x,\qquad v\in{\mathcal S}_w,
  \]
  belongs to the coorbit space $\operatorname{Co}(Y)$.
 \item{} The extended voice transform on $\operatorname{Co}(Y)$ and the Fourier transform  on ${\mathcal M}^Y$ at $u$ satisfy 
 \begin{align}
 V_e\left(\pi(f)u\right)&=f,\qquad f\in{\mathcal M}^Y\\
 \pi(V_e T)u&=T,\qquad T\in\operatorname{Co}(Y).
 \end{align}
   \item{} $V_e$ is a bijection from $\operatorname{Co}(Y)$  onto ${\mathcal M}^Y$ whose inverse is the map $f\mapsto\pi(f)u$.
  \end{enumerate}
\end{theorem}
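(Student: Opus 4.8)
The plan is to deduce Theorem~\ref{Ycoorbit} from the corresponding facts about the larger spaces already recorded in Theorems~\ref{intersections-1} and~\ref{Yprop}, by intersecting over $p\in(1,\infty)$. The key preliminary observation is that $Y\subset{\mathcal U}_w$ (part~3 of Theorem~\ref{Yprop}, via $w^{-1}(x)\le m(x)/m(e)$), so that the extended voice transform $V_e$ and the map $\Phi\mapsto\pi(\Phi)u$ of Theorem~\ref{intersections-1} apply verbatim to every element of $Y$ and of ${\mathcal M}^Y$.

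\emph{Step 1 (image of $V_e$).} For $T\in\operatorname{Co}(Y)$ we have by definition $V_eT\in Y$, and by \eqref{eq:ExtendedVoiceTransformReproducing} also $V_eT*K=V_eT$; hence $V_e(\operatorname{Co}(Y))\subseteq{\mathcal M}^Y$. Conversely, if $F\in{\mathcal M}^Y$ then $F\in Y\subset{\mathcal U}_w$, so $\pi(F)u\in{\mathcal S}_w'$ is defined and Theorem~\ref{intersections-1}(1) gives $V_e\pi(F)u=F*K=F\in Y$, i.e.\ $\pi(F)u\in\operatorname{Co}(Y)$ and $F$ lies in the image. This simultaneously proves parts~1 and~2. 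Closedness of ${\mathcal M}^Y$ in $Y$ follows because convolution with the fixed kernel $K$ is continuous on each $L_{p,m}(G)$ — this is where I would invoke part~2 of Theorem~\ref{Yprop} (left-translation invariance of $Y$) together with $K\in{\mathcal T}_w$ and part~3 of Theorem~\ref{Yprop} to make sense of $F*K$ and bound its $L_{p,m}$-norm; then ${\mathcal M}^Y=\bigcap_p\{F\in Y: F*K=F\}$ is an intersection of closed sets.

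\emph{Step 2 (inversion identities).} For $F\in{\mathcal M}^Y$, the identity $V_e(\pi(F)u)=F$ was just obtained from $V_e\pi(F)u=F*K=F$. For $T\in\operatorname{Co}(Y)\subseteq{\mathcal S}_w'$, since $V_eT\in{\mathcal M}^Y\subseteq{\mathcal M}^{{\mathcal U}_w}$, part~\ref{sotto}) of Theorem~\ref{intersections-1} (the relation \eqref{eq:39}, $\pi(\Phi)u$ being the left inverse of $V_e$ on ${\mathcal M}^{{\mathcal U}_w}$) yields $\pi(V_eT)u=T$. This gives part~3, and part~4 is then immediate: $V_e:\operatorname{Co}(Y)\to{\mathcal M}^Y$ is surjective by Step~1, injective by Theorem~\ref{intersections-1}(3) (or directly from $\pi(V_eT)u=T$), and $f\mapsto\pi(f)u$ is a two-sided inverse by the two displayed identities.

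\emph{Main obstacle.} The only genuinely non-formal point is verifying that the map $F\mapsto F*K$ is well defined and continuous from $Y$ into $Y$ (so that ${\mathcal M}^Y$ is a closed \emph{subspace}, and so that ${\mathcal M}^Y=V_e(\operatorname{Co}(Y))$ sits inside $Y$ rather than merely inside ${\mathcal U}_w$). For a single $L_{p,m}(G)$ one wants an estimate of the form $\norm{F*K}_{L_{p,m}}\lesssim\norm{K}_{L_{1,w}}\norm{F}_{L_{p,m}}$, but $K$ need not lie in $L_{1,w}(G)$ under Assumption~\ref{assume:KernelAlmostIntegrable}; this is precisely the difficulty the Fr\'echet framework is designed to circumvent. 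The resolution is to use the $w$-moderateness of $m$ together with $K\in{\mathcal T}_w$: writing $m(x)\le w(xy^{-1})m(y)=w(y x^{-1})m(y)$ one bounds $m(x)\abs{F*K(x)}\le\int_G w(yx^{-1})\abs{K(yx^{-1})}\,m(y)\abs{F(y)}\,\d y$, which is a convolution of $m\abs{F}\in L_p(G)$ with $(wK)^{\vee}\in L_1(G)$ (the latter because $w K\in L_{1}$ follows from $K\in L_{p,w}$ for suitable $p$ combined with $K\in L_{q,w}$ and H\"older, or more directly from the integrability bookkeeping already carried out in \cite{DDDLSTV}); Young's inequality then closes the estimate for every $p$, and continuity on $Y$ follows. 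I would present this convolution estimate as a short lemma (or cite the analogous computation in \cite{DDDLSTV}/\cite{DDDSSTV}) and the rest of the argument is the bookkeeping above.
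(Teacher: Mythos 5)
Your overall reduction is the same as the paper's: both arguments funnel the theorem through the general machinery of \cite{DDDLSTV} (the paper invokes Propositions~2.2, 2.4 and 2.6 and Lemma~2.5 there with $E=\mathcal{S}_w$ and model space $Y$; you invoke Theorem~\ref{intersections-1} via the inclusion $Y\subset\mathcal{U}_w$, which is legitimate by Theorem~\ref{Yprop}). Your Steps~1 and~2 are sound and match the paper's logic: $V_eT\in Y$ together with \eqref{eq:ExtendedVoiceTransformReproducing} gives one inclusion; $V_e\pi(F)u=F*K=F$ for $F\in\mathcal{M}^Y$ gives the other inclusion, item~2 and the first identity of item~3; and \eqref{eq:39} (equivalently, Lemma~2.5 of \cite{DDDLSTV} combined with $V_eT*K=V_eT$) gives $\pi(V_eT)u=T$, hence item~4.

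The gap is in your ``main obstacle'' paragraph. You correctly identify that the one non-formal point is making sense of $F*K$ for $F\in Y$ and controlling it in every $L_{r,m}(G)$, and you correctly note that $K\notin L_{1,w}(G)$ in general --- but your proposed resolution then rests on the claim that $wK\in L_1(G)$, i.e.\ $K\in L_{1,w}(G)$, ``by H\"older from $K\in L_{p,w}$ for suitable $p$''. This is false: $\bigcap_{p>1}L_{p,w}(G)\not\subset L_{1,w}(G)$ in general (the Shannon kernel $\sinc$ of Section~\ref{sec:shannon} is the standing counterexample, and the remark following Assumption~\ref{assume:KernelAlmostIntegrable} states explicitly that even $K\in L_1(G)$ can fail). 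As written, your estimate therefore reduces to exactly the hypothesis that the Fr\'echet setting is designed to avoid. The correct fix --- the one the paper uses, cf.\ Lemma~\ref{LemmageneralizedYoung} and its application in Lemma~\ref{continuity} --- is the generalized weighted Young inequality \eqref{generalizedYoung} with \emph{both} exponents strictly greater than $1$: for a target $r\in(1,\infty)$ choose $1<q<r$ and $p$ with $1+1/r=1/p+1/q$; then $\norm{F*K}_{L_{r,m}}\lesssim\norm{F}_{L_{p,m}}\norm{K}_{L_{q,w}}$, and both factors are finite precisely because $F$ lies in \emph{every} $L_{p,m}(G)$ and $K$ in every $L_{q,w}(G)$. (Pointwise well-definedness of the convolution integral is handled separately in the paper via the K\"othe-dual statement $fK\in L_1(G)$ for $f\in Y$, item~3 of Theorem~\ref{Yprop}, rather than via a Young bound.) With this substitution your argument closes; without it, the continuity of $F\mapsto F*K$ on $Y$, and hence the closedness of $\mathcal{M}^Y$, is unproven.
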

\begin{proof} Proposition~2.2 in~\cite{DDDLSTV} ensures that for every $f\in Y$ the convolution $f*K$ is well defined. Indeed, the asumption  that $K\in Y^\sharp$ used in the proof is equivalent to assuming that $fK\in L_1(G)$ for every $f\in Y$, and this is satisfied by the first item in Theorem~\ref{Yprop} because $K\in{\mathcal T}_w$. Further, Proposition~2.4 in~\cite{DDDLSTV}, whereby one has to take $E={\mathcal S}_w$, ensures that $\pi(f)u$ is a well defined  element of ${\mathcal S}_w'$ for every $f\in Y$. Indeed,  the requirement (13) of that proposition is satisfied because it consists in the fact that $fVv\in L_1(G)$ for all $f\in Y$ and every $v\in {\mathcal S}_w$, which is implied by the last item in Theorem~\ref{Yprop} because $Vv\in{\mathcal T}_w$. The content of the present theorem is then the same as that of Proposition~2.6 in~\cite{DDDLSTV}, where again one has to put  $E={\mathcal S}_w$ and also $F=Y$. Indeed, as already observed, condition (13) in~\cite{DDDLSTV} is satisfied. Hence (15) in~\cite{DDDLSTV} holds true for every $f\in Y$, namely $V_e\left(\pi(f)u\right)=f*K$, but since ${\mathcal M}^Y\subseteq Y$ we have 
\[
V_e\left(\pi(f)u\right)=f*K=f
\]
for every $f\in{\mathcal M}^Y$. Therefore $V_e\left(\pi(f)u\right)\in Y$ and hence $\pi(f)u\in\operatorname{Co}(Y)$ for every $f\in{\mathcal M}^Y$. This proves 2 and the first equality in 3.
 In order to conclude it is enough to show that  $V_eT*K=V_eT$ for every  $T\in\operatorname{Co}(Y)$. If so, then Lemma~2.5 in~\cite{DDDLSTV} tells us that this is equivalent to the second equality in~3. The fact that $V_eT*K=V_eT$ for every  $T\in\operatorname{Co}(Y)$  is exactly~(21a) in Proposition~2.6 in~\cite{DDDLSTV}.  
\end{proof}

{ By means of the above theorem, which will be referred to as the Correspondence Principle, it is possible to endow 
$\operatorname{Co}(Y)$ with a natural Fr\'echet topology, simply by transferring via (the inverse of) the extended voice transform $V_e$ the  Fr\'echet topology that ${\mathcal M}^Y$ inherits as closed subspace of~$Y$. The idea of transferring the topology using the extended voice transform mimics the strategy that is taken in the Banach space setup where the coorbit norm is defined using the Correspondence Principle.}

\section{Atomic Decomposition} \label{atomic}
After the { topology and the properties of the} generalized coorbit spaces { $\operatorname{Co}(Y)$ have been} established, the next task clearly is the construction of suitable discretizations of these spaces, i.e., to provide  some kind of an atomic decomposition. The first question is what an atomic decomposition of  a Fr\'echet space should be.  In the classical coorbit setting, this means that there exists a countable family of functions in the coorbit space, the atoms, such that every element can be written as a linear combination of these atoms, and that norm equivalences of the coorbit norms and weighted sequence norms of the coefficients  hold. Now, these norm equivalences  can be interpreted as the fact that  linear mappings from  the coorbit spaces to  the weighted sequence spaces  are continuous, and vice versa.  On a   Fr\'echet space,  one does not have a norm, but nevertheless a topology so that continuity of mappings is well-defined.  Therefore, the following definition arises naturally.

\begin{definition}   Let ${\mathcal F}$  be a Fr\'echet space, and let $b(\Lambda)$ be  a sequence space. A set  $\{a_{\lambda}\}_{\lambda \in \Lambda}  \subset {\mathcal F}$ gives rise to an {\rm atomic decomposition}
if every $f \in {\mathcal F}$  has an expansion
$$ f=\sum_{\lambda  \in \Lambda}  c_{\lambda}(f)a_{\lambda}$$
and the mappings
\begin{itemize}
 \item[(i)] Analysis:
$$  A~:~~{\mathcal F}  \longrightarrow b(\Lambda), \qquad A(f)=\{c_{\lambda}(f)\}_{\lambda   \in \Lambda} $$
\item[(ii)]  Synthesis: 
$$  S~:~~ b(\Lambda)  \longrightarrow{\mathcal F}, \qquad S(\{d_{\lambda}\}_{\lambda \in \Lambda})=\sum_{\lambda \in \Lambda} d_{\lambda} a_{\lambda}$$\vspace{-2mm}
\end{itemize}
are well-defined and continuous.
\end{definition}

{ To establish such an atomic decomposition for
  $\operatorname{Co}(Y)$, observe that by Theorem~\ref{Ycoorbit} {,  and thus by the topology defined on $\operatorname{Co}(Y)$},  it  is enough to prove an atomic decomposition for $\mathcal M^Y$. As in
the classical setting, we are looking at atoms of the form {  $L_{g_i}K$}
where $\{g_i\}_{i\in I}$ is a suitable family of elements in $G$ and
the corresponding coefficients are given by
\[
a_i(f) =\langle f,\phi_i\rangle,
\]
where $\{\phi_i\}_{i\in }$ is a suitable partition of unity of $G$.}
To this end, we start with a $Q$-dense set in the group $G$, whereby
$Q$ is a compact set  with nonvoid interior and  $e \in Q$. Then, a countable family $X= (g_i)_{i\in \II}$  is 
said to be {\em $Q$-dense} if $\cup_{i\in \II} g_i Q=G$.  Moreover, let $\Phi =(\phi_i, X, Q)$ denote a partition of unity subordinate to the $Q$-dense set $X$, i.e., 
\begin{equation}  \label{partition}
\text{supp}\phi_i \subset g_i Q, \quad 0\leq \phi_i \leq 1~\text{for all}~ i\in \II, \quad \sum _{i \in \II} \phi_i=1.
\end{equation}
Based on such a partition of unity, our discretization operator is defined as follows: 
\begin{equation} \label{defJphi}
J_ \phi (F) := \sum_{i\in\II}\langle F, \phi_i \rangle L_{g_i} K
\end{equation}
The first step is to show the continuity of   this operator.  We need the so-called  {\em $Q$-oscillation}, { which is defined by:}
\begin{equation} \label{defosc}
{ \operatorname{osc}_Q(F)} := \sup_{q \in Q}  |F(qg)-F(g)| .
\end{equation}

\begin{lemma}  \label{continuity} Let { $\|\operatorname{osc}_Q(K) \|_{L_{q,w}}< \infty$} for all  { $1<q<\infty$}. 
Then the  map { $J_{\phi}~:~{\mathcal M}^Y \rightarrow {\mathcal M}^Y$} defined by
\[
 J_ \phi (F) := \sum_{i\in\II}\langle F, \phi_i \rangle L_{g_i} K
\]
is continuous. 
\end{lemma}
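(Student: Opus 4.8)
The plan is to reduce the continuity of the linear map $J_\phi\colon\mathcal M^Y\to\mathcal M^Y$ to a single pointwise domination of $J_\phi(F)$ by a weighted convolution of $|F|$ with the kernel $|K|+\operatorname{osc}_Q(K)$, and then to apply a Young-type inequality adapted to the $w$-moderate weight $m$. The conceptual point is that $|K|+\operatorname{osc}_Q(K)$ here plays the role that an $L_{1,w}$-kernel plays in the classical Banach-space theory: it is \emph{not} in $L_{1,w}(G)$, but by Assumption~\ref{assume:KernelAlmostIntegrable} together with the hypothesis of the lemma it lies in \emph{every} $L_{q,w}(G)$, $1<q<\infty$, and since every $F\in Y$ lies in \emph{every} $L_{q,m}(G)$, this is exactly the room the Fr\'echet construction was designed to provide.

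First I would establish the pointwise estimate. For $F\in\mathcal M^Y\subseteq Y$ the number $\langle F,\phi_i\rangle$ is well defined ($\phi_i$ has compact support and $F$ is locally integrable) and $\bigl|\langle F,\phi_i\rangle\bigr|\le\int_G|F(y)|\,\phi_i(y)\,\d y$. If $y\in\operatorname{supp}\phi_i\subseteq g_iQ$, then $q_i:=g_i^{-1}y\in Q$ and $g_i^{-1}x=q_i\,(y^{-1}x)$, so the triangle inequality and the definition~\eqref{defosc} of the $Q$-oscillation give
\[
\bigl|(L_{g_i}K)(x)\bigr|=\bigl|K(q_i\,y^{-1}x)\bigr|\le\bigl|K(y^{-1}x)\bigr|+\operatorname{osc}_Q(K)(y^{-1}x).
\]
Multiplying by $\bigl|\langle F,\phi_i\rangle\bigr|$, summing over $i\in\II$, and using $0\le\phi_i\le1$ and $\sum_{i\in\II}\phi_i\equiv1$ from~\eqref{partition} yields, for almost every $x$,
\[
\sum_{i\in\II}\bigl|\langle F,\phi_i\rangle\bigr|\,\bigl|(L_{g_i}K)(x)\bigr|\;\le\;\Bigl(|F|\ast\bigl(|K|+\operatorname{osc}_Q(K)\bigr)\Bigr)(x)=:H(x).
\]
In particular, once $H$ is known to be finite almost everywhere, the series~\eqref{defJphi} defining $J_\phi(F)$ converges absolutely almost everywhere and $|J_\phi(F)|\le H$.

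Next I would estimate $H$. Since $|K|+\operatorname{osc}_Q(K)\in\mathcal T_w$ and $|F|\in Y$, the convolution $H$ is well defined in the same sense in which $f\ast K$ was used in Theorem~\ref{Ycoorbit} --- it rests on the fact, recorded in Theorem~\ref{Yprop}, that $fG\in L_1(G)$ whenever $f\in Y$ and $G\in\mathcal T_w$. Moving the weight across the convolution by means of the $w$-moderateness inequalities~\eqref{eq:ctrl1}--\eqref{eq:ctrl2} reduces $\|H\|_{L_{p,m}}$ to an unweighted Young estimate, and since both $|F|$ and $|K|+\operatorname{osc}_Q(K)$ lie in every $L^q$-space (with the respective weight) one may place both factors in $L^{q}$ with $q$ chosen so that Young's inequality lands the result in $L^p$ --- e.g.\ $q=\tfrac{2p}{p+1}\in(1,\infty)$ --- to obtain, for each $1<p<\infty$,
\[
\|J_\phi(F)\|_{L_{p,m}}\le\|H\|_{L_{p,m}}\le C_p\,\|F\|_{L_{2p/(p+1),\,m}},\qquad C_p:=\bigl\|K\bigr\|_{L_{2p/(p+1),w}}+\bigl\|\operatorname{osc}_Q(K)\bigr\|_{L_{2p/(p+1),w}}.
\]
The right-hand side is finite, so letting $p$ range over $(1,\infty)$ shows at once that $J_\phi(F)\in\bigcap_{1<p<\infty}L_{p,m}(G)=Y$ and that every defining seminorm of the target $\mathcal M^Y$ is controlled by a single defining seminorm of the source, which is precisely the continuity of $J_\phi$. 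Finally, each atom obeys $(L_{g_i}K)\ast K=L_{g_i}(K\ast K)=L_{g_i}K$ by~\eqref{eq:32c} (left translation commutes with convolution in the left variable), so every finite partial sum of~\eqref{defJphi} lies in $\mathcal M^Y$; applying the estimate above to the tails, which are dominated by the fixed function $H\in Y$ and tend to $0$ pointwise almost everywhere, gives convergence of the partial sums to $J_\phi(F)$ in every $L_{p,m}(G)$, hence in $Y$, and $\mathcal M^Y$ is closed in $Y$ by Theorem~\ref{Ycoorbit}, so $J_\phi(F)\in\mathcal M^Y$.

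I expect the only genuinely delicate step to be the convolution estimate for $\|H\|_{L_{p,m}}$: in the Banach-space theory one convolves with an $L_{1,w}$-kernel, which is unavailable here because $K\notin L_{1,w}(G)$, so one must instead exploit that the kernel lies in \emph{all} $L_{q,w}(G)$ and that $F$ lies in \emph{all} $L_{q,m}(G)$, splitting the Young exponents accordingly and taking due care with the modular function when transferring the weight across the convolution. Everything else --- the pointwise reduction, the absolute and norm convergence of the series (consequences of the displayed estimate), and the closedness of $\mathcal M^Y$ (Theorem~\ref{Ycoorbit}) --- is routine.
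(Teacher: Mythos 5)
Your proof is correct, but it follows a genuinely different route from the paper's. The paper does not estimate $J_\phi F$ directly: it writes $F=F\ast K$ (reproducing property) and $\sum_i\phi_i\equiv 1$ to telescope the difference, obtaining the pointwise bound $\lvert (Id-J_\phi)F\rvert\lesssim \lvert F\rvert\ast\operatorname{osc}_Q(K)$, and then applies the generalized weighted Young inequality to conclude that $Id-J_\phi$, hence $J_\phi$, is continuous; only the oscillation enters, which is why the lemma's hypothesis mentions $\operatorname{osc}_Q(K)$ alone. You instead dominate $J_\phi F$ itself by $\lvert F\rvert\ast\bigl(\lvert K\rvert+\operatorname{osc}_Q(K)\bigr)$, using $g_i^{-1}x=q_i\,(y^{-1}x)$ on $\operatorname{supp}\phi_i$, which additionally requires $K\in L_{q,w}(G)$ for all $1<q<\infty$ --- free of charge here by Assumption~\ref{assume:KernelAlmostIntegrable}, so your hypotheses are no stronger. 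Both arguments then invoke Lemma~\ref{LemmageneralizedYoung} with the same exponent bookkeeping. What your version buys is that the domination by a fixed $H\in Y$ gives absolute a.e.\ convergence of the defining series, norm convergence of its partial sums, and (via $(L_{g_i}K)\ast K=L_{g_i}K$ and the closedness of $\mathcal M^Y$) an explicit verification that $J_\phi F\in\mathcal M^Y$ --- points the paper's proof leaves implicit. What the paper's version buys is the quantitative control of $\lVert F-J_\phi F\rVert$ in terms of $\lVert\operatorname{osc}_Q(K)\rVert_{L_{q,w}}$ alone, which is the estimate one would feed into a Neumann-series argument for invertibility in the classical Banach setting (and whose failure to be uniformly small is exactly the obstruction discussed in Remark~\ref{ discussion1}).
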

\begin{proof}
 Let { $F \in {\mathcal M}^Y$}. Then  $F \in L_{ p,m}$ for all $p>1$. { If we} 
 show that $Id-J_{\phi}$ is continuous, then clearly also $J_{\phi}$ is continuous. We start with a pointwise estimate. 
 The reproducing kernel property  and the fact that $\Phi$ is a partition of unity imply
  \begin{eqnarray}
 |(Id-J_{\phi})(F)(g)|  
 &=&|\int _G F(h)K(h^{-1}g) dh -\sum_{i\in\II} \int_G  F(h)\phi_i(h) K(g_i^{-1} g) dh |\\
  &=&|\int_G F(h)(\sum_{i\in\II} \phi_i(h)(K(h^ {-1}g)-K(g_i^{-1} g) dh)|\\
  & \leq &\int_G (\sum_{i\in\II} |F(h)|\phi_i(h)\sup_{q \in Q}|(K(h^ {-1}g)-K(q h^{-1} g) |dh)\\
 & \leq &\int_G |F(h)|\sum_{i\in\II}\phi_i(h){\operatorname{osc}_Q(K)}(h^ {-1}g) dh \\ 
 &\lesssim & (|F| \ast {\operatorname{osc}_Q(K)})(g) .
 \end{eqnarray}
 
 Then  by the generalized weighted Young inequality  \eqref{generalizedYoung} we obtain, for { $1/p+1/q=1+1/r$,}
\[
\| F- J_{\phi}(F)\|_{L_{r,m}}\lesssim 
 \| |F| \ast\operatorname{osc}_Q(K)\|_{L_{r,m}}\lesssim \|F\|_{L_{p,m}} \| \operatorname{osc}_Q(K)\|_{L_{q,w}} . 
 \]
 By our assumptions on { $\operatorname{osc}_Q(K)$} and the definition of { Fr\'echet} topology the result follows. 
 \end{proof}

Now, we are in position to state and to prove the main result of this paper. 

\begin{theorem} \label{mainresult}
Assume that  $J_{\Phi}$ is injective with continuous left inverse  $J_{\Phi}^{-1}$.  Then
\begin{itemize}
\item[(i)]Every { $T \in\operatorname{Co}(Y)$} can be represented as
\begin{equation}  \label{hurra}
 T= \sum_{{i\in\II}}\underbrace{\langle V_{\psi}(T), \phi_{i}\rangle}_{\text{coefficients}}\underbrace{V_{\psi}^{-1} J_{\Phi}^{-1} L_{g_{i}}K}_{{\text{atoms}}}.
 \end{equation}
 \item[(ii)] Set $$b(\Lambda)=\bigcap_{{ 1<p<\infty}}\ell_{p,m}.$$ 
If in addition    $\|{\operatorname{osc}_Q}(J_{\Phi}^{-1}K)\|_{L_{p,w}} <\infty$ for all {$1<p<\infty$},  then 
$$\{ V_{\psi}^{-1} J_{\Phi}^{-1} L_{g_{i}}K\}_{i \in \II}$$
gives rise to an atomic decomposition { of $\operatorname{Co}(Y)$}.
\end{itemize}
\end{theorem}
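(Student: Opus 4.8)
The plan is to establish the atomic decomposition in two movements: first transport everything to the model space $\mathcal M^Y$ via $V_e$ (the Correspondence Principle, Theorem~\ref{Ycoorbit}), and then exhibit the reconstruction of an arbitrary $F\in\mathcal M^Y$ from the coefficients $\langle F,\phi_i\rangle$ using the hypothesis that $J_\Phi$ is invertible with continuous left inverse. For part (i), the key algebraic identity is that for $F\in\mathcal M^Y$ one has, by definition of $J_\Phi$ in \eqref{defJphi},
\[
J_\Phi(F)=\sum_{i\in\II}\langle F,\phi_i\rangle\, L_{g_i}K,
\]
so applying the continuous left inverse $J_\Phi^{-1}$ and using that $F\in\mathcal M^Y$ is in the closed subspace of $Y$ on which $J_\Phi$ is a bijection onto its image yields
\[
F=J_\Phi^{-1}J_\Phi(F)=\sum_{i\in\II}\langle F,\phi_i\rangle\, J_\Phi^{-1}L_{g_i}K,
\]
the series converging in the Fr\'echet topology of $\mathcal M^Y$ because $J_\Phi^{-1}$ is continuous and the partial sums of $J_\Phi(F)$ converge (here I would invoke Lemma~\ref{continuity}, which already gives convergence of the defining series of $J_\Phi$ in every $L_{p,m}$, hence in $Y$). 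Then set $F=V_\psi(T)=V_eT$; applying $V_\psi^{-1}=\pi(\cdot)u$ and using its continuity and linearity (Theorem~\ref{Ycoorbit}, item~4) gives exactly \eqref{hurra}, with the atoms $a_i=V_\psi^{-1}J_\Phi^{-1}L_{g_i}K$ and coefficients $c_i(T)=\langle V_\psi(T),\phi_i\rangle$.

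For part (ii), I must check that the analysis map $A\colon T\mapsto\{\langle V_\psi(T),\phi_i\rangle\}_{i\in\II}$ and the synthesis map $S\colon\{d_i\}\mapsto\sum_i d_i a_i$ are well defined and continuous as maps between $\operatorname{Co}(Y)$ (equivalently $\mathcal M^Y$) and $b(\Lambda)=\bigcap_{1<p<\infty}\ell_{p,m}$. For $A$: since $V_\psi$ is a topological isomorphism $\operatorname{Co}(Y)\to\mathcal M^Y$, it suffices to bound $\|\{\langle F,\phi_i\rangle\}\|_{\ell_{p,m}}$ by $\|F\|_{L_{p,m}}$ for $F\in\mathcal M^Y$; this is the standard sampling estimate — because $\supp\phi_i\subset g_iQ$, $0\le\phi_i\le1$ and the $g_iQ$ have bounded overlap (a consequence of $Q$-density together with relative separatedness implicit in the partition of unity), one gets $|\langle F,\phi_i\rangle|\lesssim\int_{g_iQ}|F(x)|\,dx$, and the $w$-moderateness of $m$ plus H\"older on the compact set $Q$ converts this to a discrete $\ell_{p,m}$-bound by a constant depending only on $p$, $Q$, $w$. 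For $S$: one estimates $\|\sum_i d_i L_{g_i}K\|_{L_{r,m}}$, which by the $Q$-oscillation technique of Lemma~\ref{continuity} (moving the convolution kernel's argument through $Q$) is controlled by $\|\{d_i\}\|_{\ell_{p,m}}\,\bigl(\|K\|_{L_{q,w}}+\|\osc_Q(K)\|_{L_{q,w}}\bigr)$ with $1/p+1/q=1+1/r$; then apply the continuous $J_\Phi^{-1}$, and finally $V_\psi^{-1}$ — but note the synthesis series is $\sum d_i V_\psi^{-1}J_\Phi^{-1}L_{g_i}K$, so one should either commute $J_\Phi^{-1}$ past the sum (legitimate by its continuity once the sum $\sum d_i L_{g_i}K$ is shown to converge in $Y$) or, more cleanly, bound $\sum d_i J_\Phi^{-1}L_{g_i}K$ directly using the extra hypothesis $\|\osc_Q(J_\Phi^{-1}K)\|_{L_{p,w}}<\infty$, which is precisely what is needed to run the Lemma~\ref{continuity} estimate with $J_\Phi^{-1}L_{g_i}K=L_{g_i}(J_\Phi^{-1}K)$ (using that $J_\Phi^{-1}$, being built from left-translation-covariant operations, commutes with $L_{g_i}$) in place of $L_{g_i}K$.

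The main obstacle I anticipate is the synthesis side: making rigorous that $J_\Phi^{-1}$ commutes with left translations and with infinite sums, so that the atoms are honestly $L_{g_i}(V_\psi^{-1}J_\Phi^{-1}K)$ and the convergence of $\sum d_i a_i$ in the Fr\'echet topology of $\operatorname{Co}(Y)$ can be reduced — via the isomorphism $V_\psi$ — to convergence of $\sum d_i L_{g_i}(J_\Phi^{-1}K)$ in $Y$, which is exactly where the hypothesis $\|\osc_Q(J_\Phi^{-1}K)\|_{L_{p,w}}<\infty$ enters through the generalized weighted Young inequality \eqref{generalizedYoung}. The intertwining of $J_\Phi^{-1}$ with $L_g$ follows because $J_\Phi$ itself intertwines them (each $\phi_i$-coefficient against a translate, paired with $L_{g_i}K$, transforms correctly under a global left translation, after reindexing; more carefully one uses that $J_\Phi$ is an operator on $\mathcal M^Y$ that, modulo the reproducing relation $F*K=F$, acts as convolution-type and hence commutes with $\lambda$), so its inverse does too; the bounded-overlap property needed for the analysis estimate should be recorded as a small lemma or cited from the classical coorbit literature. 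Once these two points are in place, part (ii) follows by assembling the bounds above and invoking the definition of an atomic decomposition together with the Fr\'echet topology transferred to $\operatorname{Co}(Y)$ by the Correspondence Principle.
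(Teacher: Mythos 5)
Your proposal is correct and follows essentially the same route as the paper: part (i) by writing $F=J_\Phi^{-1}J_\Phi F$, expanding $J_\Phi F$ by its definition, pulling $J_\Phi^{-1}$ through the sum by continuity, and transporting via $V_\psi^{-1}$; part (ii) by the sampling estimate $\|\{\langle F,\phi_i\rangle\}\|_{\ell_{p,m}}\lesssim\|F\|_{L_{p,m}}$ for the analysis map and the pointwise domination of $\sum_i c_i L_{g_i}(J_\Phi^{-1}K)$ by $\bigl(\sum_i|c_i|\chi_{g_iQ}\bigr)*\bigl(\osc_Q(J_\Phi^{-1}K)+|J_\Phi^{-1}K|\bigr)$ combined with the generalized weighted Young inequality for the synthesis map. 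The subtlety you flag about identifying $J_\Phi^{-1}L_{g_i}K$ with $L_{g_i}(J_\Phi^{-1}K)$ is present in the paper's own argument as well, which passes over it silently.
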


\begin{proof}We start by showing (i). 
\begin{eqnarray}
F &=& J_\phi^{-1} J_\phi F\\
&=& J_\phi^{-1}\left(\sum_{i\in\II}\langle F, \phi_i \rangle L_{g_i} K \right)\\
&=& \sum_{i\in\II}\langle F, \phi_i \rangle J_\phi^{-1} L_{g_i} K ~,
\end{eqnarray}
where $F\in\cM_m$,  the reproducing kernel space. Then for { $T\in\operatorname{Co}(Y)$} we get by the Correspondence Principle
i .e., by applying  Theorem \ref{Ycoorbit},
 \begin{eqnarray}
 V_\psi(T) &=& J_\phi^{-1} J_\phi V_\psi(T)\\
&=& J_\phi^{-1}\left(\sum_{i\in\II}\langle V_\psi(T), \phi_i \rangle L_{g_i} K \right)\\
&=& \sum_{i\in\II}\langle V_\psi(T), \phi_i \rangle J_\phi^{-1} L_{g_i} K 
\end{eqnarray}
and therefore,
\begin{eqnarray}
T &=&  V_\psi^{-1} V_\psi(T) \\
&=& \sum_{i\in\II}\underbrace{\langle V_\psi(T), \phi_i \rangle}_{\text{coefficients}}  \underbrace{V_\psi^{-1} J_\phi^{-1} L_{g_i} K }_{\text{atoms}}
\end{eqnarray}
and (i) is shown. 

The next step is to prove (ii).
We have to  show that:
\begin{eqnarray}
S:~ b(\Lambda) &\longrightarrow& { \cM^Y}\\
\{c_i\}_{i\in\II} &\mapsto&  \sum_{i\in\II} c_i J_\phi^{-1} L_{g_i} K \\
&& \text{is well-defined and continuous}\\
A:~{\operatorname{Co}(Y)} &\longrightarrow& b(\Lambda)\\
T &\mapsto&  \{\langle V_\psi T,\phi_i\rangle\}_{i\in \II}\\
&& \text{is continuous.~~}
\end{eqnarray}
{ If this is established, then} the result follows by the Correspondence Principle. We start with 
 the operator $A$.  By  \cite{DDGL}   p. 98, eq. (3.27), we observe  that 
$$\|\{\langle F,\phi_i \rangle\}_{i\in \II}\|_{\ell_{p,m} }\lesssim\|F\|_{L_{p,m}}.$$
Then,
$$\|\{\langle V_\psi(T),\phi_i \rangle\}_{i\in \II}\|_{\ell_{p,m}} \lesssim\|V_\psi(T)\|_{L_{p,m}}\lesssim\|T\|_{\cH_{p,m}}~.$$
Now we consider the operator $S$. At first, we observe
\begin{eqnarray}
| \sum_{i\in\II} c_i (J_\phi^{-1}  K)(g_i^{-1}\circ l) | \lesssim
\left(\sum_{i\in\II}|c_i| \chi_{g_i Q} \right)*\left({\rm osc}_Q (J_\phi^{-1}K)+|J_\phi^{-1}K|\right)(l)
\end{eqnarray}
see \cite{DDGL},  p. 100, Lemma 3.18 for details. Then,
\begin{eqnarray}
\| \sum_{i\in\II} c_i (J_\phi^{-1}  K)(g_i^{-1}\circ \cdot)\|_{L_{p,m}}\lesssim
\left\|\left(\sum_{i\in\II}|c_i| \chi_{g_i Q} \right)*\left({\rm osc}_Q (J_\phi^{-1}K)+|J_\phi^{-1}K|\right)(\cdot)   \right\|_{L_{p,m}}~,
\end{eqnarray}
and applying the  generalized weighted Young inequality with $1/p + 1 = 1/q + 1/{q'}$ yields
\begin{eqnarray}
&&\hspace*{-3cm}\left\|\left(\sum_{i\in\II}|c_i| \chi_{g_i Q} \right)*\left({\rm osc}_Q (J_\phi^{-1}K)+|J_\phi^{-1}K|\right)(\cdot)   \right\|_{L_{p,m}}\\
&\lesssim& \left\|\sum_{i\in\II}|c_i| \chi_{g_i Q}(\cdot) \right\|_{L_{q,m}}\cdot
\left\|{\rm osc}_Q (J_\phi^{-1}K)+|J_\phi^{-1}K|(\cdot)   \right\|_{L_{q',w}}~.
\end{eqnarray}
By our assumptions, $\|{\rm osc}_Q (J_\phi^{-1}K)\|_{L_{ q',w}}$ and $\|(J_\phi^{-1}K)(\cdot)\|_{L_{q',w}}$  are finite. Therefore, 
by using Lemma 3.18 in  \cite{DDGL}, we obtain
$$ \| \sum_{i\in\II} c_i (J_\phi^{-1}  K)(g_i^{-1}\circ \cdot)\|_{L_{p,m}}\lesssim \|  (c_i)_{i \in I}  \| _{\ell_{q,m}} $$
and (ii) is shown. \end{proof}
%
\begin{remark} \label{ discussion1}
\begin{itemize}
\item[(i)] It is one of the advantages of our approach that the  norms of  the $Q$-oscillations  corresponding to $K$  
and $J_{\phi}^{-1}K$, respectively, do not have to be uniformly bounded, they just have to be finite.
\item[(ii)]  In many cases, an explicit formula for  $J_{\phi}^{-1}$ will not be available, so that, at first glance, it seems to be
 diifficult to estimate the norms of the $Q$-oscillation of  $J_{\phi}^{-1}K$. However,  although the coorbit spaces might consist of  ugly distributions, the reproducing kernel spaces 
 are usually spaces of nice, smooth functions, so that in many cases the norms of the $Q$-oscillations
  turn out to be finite for all functions in the reproducing kernel space which is clearly sufficient.  
  We also observe this fact  in the examples discussed in Section \ref{sec:ex}.
  \item[(iiii)]  It is  clearly a drawback that currently no general conditions that guarantee the injectivity
   of $J_\phi$ are available. Fortunately, usually this property can be checked directly, see Section \ref{sec:ex}.
   In the classical setting ot coorbit Banach spaces,  injectivity and 
    even surjectivity    is proved by a Neumann series argument.  Of course, the 
    Neumann series setting can be generalized to Fr\'echet spaces, but in our case this series would not converge
    for this would require a  uniform bound of the $Q$-oscillations. 
     \item[(iv)]    From the examples studied in the following section, we observe that, to show the injectivity of  $J_\phi$, it is 
     not absolutely necessary  to choose a very small neighbourhood  $Q$. Nevertheless, in practical 
     applications, a small set $Q$ (which implies a denser set $X= (g_i)_{i\in \II}$)  might be advantageous 
     since  the constants depend, among other  things, on $ \bigl\|{\rm osc}_Q (J_\phi^{-1}K) \bigr\|_{L_{q',w}} $  which most likely
      will grow as $Q$ gets larger.   Moreover, as we will see in  Section \ref{sec:ex} , to show the continuity  of the left inverse,
      sometimes a denser sampling is helpful.    \end{itemize} 
     \end{remark}     
\section{Examples of Coorbit Fr\'echet Spaces}\label{sec:ex}

\subsection{The Shannon Case}\label{sec:shannon}
As a first example, we study spaces of band-limited functions. Let $G$  denote the additive group $\mathbb R$ 
with Lebesgue measure. Then $G$  acts  on the Paley--Wiener space
$${\mathcal H}=B_{\Omega}^2=\{ f \in L_2(\mathbb R) :\mbox{supp}(\hat{f})\subseteq \Omega\}, ~\Omega~ \mbox{compact interval}$$
by translations,  $\pi(b)v(x)=v(x-b)$ The following facts are well-known: 
\begin{theorem}\label{littlehelp} It  holds
  \begin{itemize}
   \item[i)] $\psi \in B^2_\Omega $ is admissible if and only if $|{\widehat{\psi}}| = 1 $ almost everywhere on $\Omega$. Then, the
   reproducing kernel is given by
\[
 K = \langle{\psi},{\pi(\cdot)\psi}\rangle_{{\mathcal H}}={\mathcal F}^{-1}\chi_\Omega .
 \]
  
   \item[ii)]
    If $ \Omega = [-\omega,\omega] $  
    $$ K(b) = 2\omega\,\sinc (2\omega b) , $$
if we use the standard notation $\sinc(x) = \sin(\pi x)/(\pi x)$.
  \end{itemize}
 \end{theorem}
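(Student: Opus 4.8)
The plan is to transfer the whole computation to the Fourier side, where translation becomes modulation and the voice transform becomes multiplication by $\overline{\widehat\psi}$. Throughout I would fix the normalization $\widehat f(\xi)=\int_{\mathbb R}f(x)e^{-2\pi\i x\xi}\,\d x$, so that ${\mathcal F}$ is unitary on $L_2(\mathbb R)$ and $\widehat{\pi(b)\psi}(\xi)=e^{-2\pi\i b\xi}\widehat\psi(\xi)$. For $v\in B^2_\Omega$ and any $\psi\in B^2_\Omega$, Parseval's formula gives
\[
  Vv(b)=\langle v,\pi(b)\psi\rangle_{{\mathcal H}}
  =\int_{\mathbb R}\widehat v(\xi)\,\overline{\widehat\psi(\xi)}\,e^{2\pi\i b\xi}\,\d\xi
  ={\mathcal F}^{-1}\!\bigl(\widehat v\,\overline{\widehat\psi}\bigr)(b),
\]
and, since $\widehat v$ is supported in $\Omega$, a second application of Parseval yields $\|Vv\|_{L_2}^2=\int_\Omega|\widehat v(\xi)|^2\,|\widehat\psi(\xi)|^2\,\d\xi$, whereas $\|v\|_{{\mathcal H}}^2=\int_\Omega|\widehat v(\xi)|^2\,\d\xi$.

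From here part~i) is immediate. If $|\widehat\psi|=1$ a.e.\ on $\Omega$, the two integrals above agree for every $v\in B^2_\Omega$, so $V$ maps $B^2_\Omega$ isometrically into $L_2(\mathbb R)$, i.e.\ $\psi$ is admissible; one also sees at once that $\widehat v\,\overline{\widehat\psi}=0$ forces $\widehat v=0$ on $\Omega$, so that $V$ is injective and the translates of $\psi$ are total in $B^2_\Omega$. Conversely, if $V$ is an isometry, then choosing $v$ with $\widehat v=\chi_E$ for an arbitrary measurable $E\subseteq\Omega$ of finite measure gives $\int_E\bigl(|\widehat\psi(\xi)|^2-1\bigr)\,\d\xi=0$, and letting $E$ vary forces $|\widehat\psi|=1$ a.e.\ on $\Omega$. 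Substituting $v=u=\psi$ into the displayed formula for $Vv$ then gives $K=V\psi={\mathcal F}^{-1}\bigl(|\widehat\psi|^2\bigr)={\mathcal F}^{-1}\chi_\Omega$, the last equality using that $|\widehat\psi|^2$ equals $1$ on $\Omega$ and $0$ off $\Omega$ because $\psi\in B^2_\Omega$.

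For part~ii) it remains to evaluate the elementary integral $K(b)={\mathcal F}^{-1}\chi_{[-\omega,\omega]}(b)=\int_{-\omega}^{\omega}e^{2\pi\i b\xi}\,\d\xi=\dfrac{\sin(2\pi\omega b)}{\pi b}=2\omega\,\sinc(2\omega b)$, with the convention $\sinc(x)=\sin(\pi x)/(\pi x)$. I do not expect any serious obstacle in this theorem; the only points needing attention are the choice of Fourier normalization, so that the factor $2\omega$ and the $\pi$'s come out correctly in part~ii), and the (elementary) density argument that the translates of $\psi$ span $B^2_\Omega$, which follows from $|\widehat\psi|=1$ a.e.\ on $\Omega$ as indicated above.
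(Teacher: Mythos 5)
Your proof is correct: the paper itself states Theorem \ref{littlehelp} without proof (as ``well-known''), and your Fourier-side computation --- Parseval to get $Vv=\mathcal{F}^{-1}(\widehat{v}\,\overline{\widehat{\psi}})$, testing the isometry identity against indicators $\widehat{v}=\chi_E$ to force $|\widehat{\psi}|=1$ a.e.\ on $\Omega$, and the elementary evaluation of $\mathcal{F}^{-1}\chi_{[-\omega,\omega]}$ --- is exactly the standard argument the authors have in mind, with the normalization handled correctly so that the factor $2\omega$ and the $\pi$'s in $\sinc$ come out right.
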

Consequently, the underlying representation is square-integrable,  but of course it is not integrable.  Therefore, it fits into our setting. 
\subsubsection{Fr\'echet coorbit spaces}
The  Fr\'echet coorbit spaces are given by { choosing $Y=\bigcap_{p>1}L_{p,m} (G)$, namely}:
\[
{ \operatorname{Co}(Y)} = \{f\in {\mathcal S}' \mid V_{\psi} f\in  \bigcap_{p>1}L_{p,m}(G)  \} 
\]
and the resulting reproducing kernel spaces  are
 \[
 {\mathcal M^Y}:=\{F\in \bigcap_{p>1}L_{p,m} (G)~|~F \ast K=F\}.
 \]
{
 \begin{remark}
Since the voice transform $V:\mathcal H \to L_2(\R)$ is the canonical
inclusion, $\pi$ is the restriction of $L$ to $\mathcal H$ and
${ \operatorname{Co}(Y)=\mathcal M^Y}$, compare with Proposition 4.8 of
\cite{DDDLSTV}. { For simplicity, for any $\phi\in L_2(\R)$ and any  $\ell\in\Z$ we write $\pi(\ell)\phi=L_\ell\phi=\phi_\ell$.}
 \end{remark}
As a consequence of the results of the previous section we
  have the following theorem, whose proof is the content of the next
  sections.
  \begin{theorem}
 Fix the weights $m=w=1$. If $\psi\in L_2(\mathbb R)$ is such that
 $|\widehat{\psi(x)}|=1$ almost everywhere on $\Omega$, then  $\{
 \pi_\ell\psi\}_{\ell\in\mathbb Z}$ is an atomic decomposition of
 $\mathcal H_m$ and the coefficients are  given by
\[
a_\ell(f) = \langle V_e f,\phi_\ell\rangle  \qquad \ell\in\mathbb Z
\]
where $\phi=(\chi_{[0,1)} \ast \chi_{[0,1)})(\cdot +1)$ is the
centralized cardinal  B-spline $N_2$ and, for any  $\ell\in \mathbb Z$,
$\phi_\ell=L_\ell\phi$.
  \end{theorem}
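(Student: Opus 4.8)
The goal is to verify the hypotheses of Theorem~\ref{mainresult} in the concrete Shannon setting with $m=w=1$, $Q=[-\tfrac12,\tfrac12]$, $g_\ell=\ell\in\Z$, $\phi=N_2(\cdot+1)$, and $K=\sinc$ (taking $\Omega=[-\tfrac12,\tfrac12]$, so $\widehat K=\chi_\Omega$). First I would record that with this $Q$ the family $X=(\ell)_{\ell\in\Z}$ is $Q$-dense and $\{\phi_\ell\}_{\ell\in\Z}$ is a partition of unity subordinate to it, since the centered $B$-spline $N_2$ is supported in $[-1,1]$ (hence $\supp\phi_\ell\subset \ell+Q\cdot 2$; a minor bookkeeping point is that one should take $Q=[-1,1]$ or rescale, but $\sum_\ell N_2(\cdot-\ell)=1$ is the essential fact) and is nonnegative with $\sum_{\ell\in\Z}\phi_\ell\equiv 1$. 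Next I would check the oscillation condition of Lemma~\ref{continuity}: because $K=\sinc$ is smooth with $K'(b)=O(b^{-1})$ as $|b|\to\infty$ but only $O(b^{-1})$, one has $\osc_Q(K)(b)\le \sup_{|q|\le c}|K(b+q)-K(b)|\lesssim \min(1,|b|^{-1})$, which is \emph{not} in $L_1$ but does lie in $L_q(\R)$ for every $q>1$; hence $\|\osc_Q(K)\|_{L_q}<\infty$ for all $1<q<\infty$, exactly as required (and this is precisely the borderline behaviour that forces the Fr\'echet, rather than Banach, framework). This makes $J_\phi$ continuous on $\cM^Y$.

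The heart of the proof is the injectivity of $J_\phi$ together with continuity of $J_\phi^{-1}$. Here I would exploit the special structure: on $\cM^Y$ the convolution identity $F*K=F$ means $\widehat F$ is supported in $\Omega$, i.e.\ $\cM^Y$ consists of band-limited functions in $\bigcap_{p>1}L_{p}(\R)$. For such $F$, $J_\phi(F)(x)=\sum_\ell \langle F,\phi_\ell\rangle\, K(x-\ell)=\sum_\ell \big(\int F(t)N_2(t-\ell+1)\,dt\big)\sinc(x-\ell)$. Since $F$ is band-limited to $[-\tfrac12,\tfrac12]$, the Shannon sampling theorem gives $F(x)=\sum_\ell F(\ell)\sinc(x-\ell)$, and $\langle F,\phi_\ell\rangle = (F*\widetilde{N_2})(\ell)$ where $\widetilde{N_2}(t)=N_2(-t+1)=N_2(t+1)$ is itself the centered $B$-spline; on the Fourier side this is multiplication of $\widehat F$ by $\widehat{N_2}(\xi)=\mathrm{sinc}^2(\xi)$ restricted to $\Omega$, then sampled at the integers. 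Thus $J_\phi$ acts, after the Shannon identification $F\leftrightarrow (F(\ell))_\ell\leftrightarrow \widehat F\in L^2(\Omega)$, as the Fourier multiplier by $\widehat{N_2}(\xi)=\mathrm{sinc}^2(\xi)=\big(\tfrac{\sin\pi\xi}{\pi\xi}\big)^2$ on $L^2[-\tfrac12,\tfrac12]$ — or, more precisely, as the discrete convolution operator whose symbol is $\sum_{k}\widehat{N_2}(\xi+k)\chi_\Omega(\xi)$, which on $\Omega=[-\tfrac12,\tfrac12]$ is just $\widehat{N_2}(\xi)$ since $N_2$ has no aliasing overlap worth correcting on this interval (or, if using a larger $\Omega$, the periodized version, still bounded below). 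Because $\widehat{N_2}(\xi)=\mathrm{sinc}^2(\xi)\ge \mathrm{sinc}^2(\tfrac12)=4/\pi^2>0$ on $[-\tfrac12,\tfrac12]$, this multiplier is bounded with bounded inverse, so $J_\phi$ is a topological isomorphism of $\cM^Y$: injectivity is immediate, and $J_\phi^{-1}$ is given by the Fourier multiplier $1/\widehat{N_2}(\xi)$, which is smooth on $\Omega$, hence $J_\phi^{-1}$ maps each $L_p(\R)\cap\cM^Y$ to itself continuously for every $p$ (the multiplier, being smooth on a neighbourhood of $\Omega$ and applied to functions band-limited to $\Omega$, acts as convolution with a Schwartz-class function restricted appropriately), so $J_\phi^{-1}$ is continuous on the Fr\'echet space $\cM^Y$.

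Finally I would verify the extra hypothesis in part~(ii) of Theorem~\ref{mainresult}, namely $\|\osc_Q(J_\phi^{-1}K)\|_{L_p}<\infty$ for all $1<p<\infty$. Since $J_\phi^{-1}K$ is again band-limited to $\Omega$ with Fourier transform $1/\widehat{N_2}(\xi)\,\chi_\Omega(\xi)$ — a smooth, compactly supported function — its inverse Fourier transform is a Schwartz function, in particular $C^1$ with rapidly decaying derivative, so $\osc_Q(J_\phi^{-1}K)(b)\lesssim \sup_{|q|\le c}|(J_\phi^{-1}K)'(b+q)|$ decays faster than any polynomial and lies in every $L_p(\R)$, $p>1$ (indeed $p\ge 1$). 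Invoking Theorem~\ref{mainresult}(ii) with $b(\Lambda)=\bigcap_{p>1}\ell_p$ then yields that $\{V_\psi^{-1}J_\phi^{-1}L_\ell K\}_{\ell\in\Z}$ is an atomic decomposition of $\operatorname{Co}(Y)=\cM^Y$; and since $V_\psi$ is here the identity inclusion and $L_\ell K=\pi_\ell\psi$ up to the admissible-vector normalization $|\widehat\psi|=1$ on $\Omega$ (so $\pi_\ell\psi$ and $L_\ell K$ span the same system and have comparable coefficients), we may equally take the atoms to be $\{\pi_\ell\psi\}_{\ell\in\Z}$ with coefficients $a_\ell(f)=\langle V_e f,\phi_\ell\rangle$.

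\textbf{Main obstacle.} The delicate point is the continuity of $J_\phi^{-1}$ in the Fr\'echet (not Hilbert) topology: the symbol $1/\widehat{N_2}$ is bounded on $\Omega$, which trivially handles the $L_2$ case, but one must ensure the corresponding convolution kernel is well enough behaved to act boundedly on $L_p(\R)$ simultaneously for \emph{all} $p>1$. This is where band-limitedness saves the day — restricting a smooth Fourier multiplier to functions spectrally supported in the fixed compact set $\Omega$ turns it into honest convolution with a fixed Schwartz function, for which $L_p$-boundedness for all $p$ is classical; without the band-limitation (e.g.\ if one tried to run the same argument on the ambient $\mathcal T_w$) one would need genuine Mihlin-type multiplier estimates. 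A secondary nuisance is matching the concrete $Q$ to the support of $N_2$ and to the normalization of $\Omega$, but this is purely bookkeeping.
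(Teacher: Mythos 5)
Your overall strategy --- verifying the hypotheses of Theorem \ref{mainresult} by identifying $J_\phi$, on the band-limited space $\mathcal M^Y$, with a Fourier multiplier that is bounded below on $\Omega$, and then checking the two oscillation conditions --- is exactly the paper's strategy, and your treatment of the left inverse is in one respect cleaner. The paper inverts the sharp-cutoff symbol $\bigl(\chi_{[-\omega,\omega]}|\widehat{\phi_{\frac{1}{2\omega}}}|\bigr)^{-2}\overline{\widehat{\phi_{\frac{1}{2\omega}}}}$ directly; its inverse Fourier transform decays only like $1/|x|$, which forces the integration-by-parts estimate of Appendix \ref{boundedness} to land in $L_t$ only for $t>1$. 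Your observation that, on functions spectrally supported in $\Omega$, one may replace $1/\widehat{N_2}$ by any smooth compactly supported extension turns $J_\phi^{-1}$ into convolution with a fixed Schwartz function and yields all the $L_p$-bounds at once. (The paper also splits the work: injectivity is proved on the integer lattice by a Fourier-series argument, and continuity of the inverse on the finer lattice $\tfrac{1}{2\omega}\mathbb Z$; your single computation at critical sampling subsumes both, modulo the normalization bookkeeping you already flagged.)

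The one genuine flaw is in your last step. You assert that $\widehat{J_\phi^{-1}K}=\chi_\Omega/\widehat{N_2}$ is ``smooth, compactly supported'' and hence that $J_\phi^{-1}K$ is Schwartz, with $Q$-oscillation in every $L_p$, $p\ge1$. But $\chi_\Omega/\widehat{N_2}$ has jump discontinuities at $\partial\Omega$ (it equals $\pi^2/4>0$ at the endpoints of $\Omega$ and $0$ outside), so $J_\phi^{-1}K$, like $K$ itself, decays only like $1/|x|$ --- integration by parts produces a boundary term proportional to $\operatorname{sinc}$ --- and the same is true of its derivative and of $\operatorname{osc}_Q(J_\phi^{-1}K)$. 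The conclusion you need, $\|\operatorname{osc}_Q(J_\phi^{-1}K)\|_{L_p}<\infty$ for all $1<p<\infty$, survives, but it fails for $p=1$ and does not follow from the reason you give. You should either estimate $\operatorname{osc}_Q(J_\phi^{-1}K)(b)=O(\min(1,|b|^{-1}))$ directly from the mean value theorem and the $1/|x|$ bound on the derivative, or argue as the paper does: using $\frac{\mathrm{d}^n}{\mathrm{d}x^n}F=F*\frac{\mathrm{d}^n}{\mathrm{d}x^n}K$, Young's inequality and the Sobolev embedding $W^2_p\hookrightarrow L_\infty$, one shows $\bigl\|\,\|F\|_{L_\infty(Q\cdot)}\bigr\|_{L_p}<\infty$ for \emph{every} $F\in\mathcal M^Y$, which applies in particular to $F=J_\phi^{-1}K$.
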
}

{
\begin{remark} \label{ discussion3}
\begin{itemize}
   \item[(i)]The case of weighted spaces can also be treated, at least for the case $w=m$. 
 (Although this might be of limited use since, due to the bad decay properties of the Shannon kernel, only weights of logarithmic type can be handled. )
  Only  the estimation of  
 the norms of the weighted $Q$-oscillation requires some care  { and is achieved using the Sobolev embedding theorem in the way that is explained in the next section. Hence the appropriate inequalities are detailed below.}
   \item[(ii)]    In \cite{DDDLSTV}, it has been shown that in our setting the $L_p$-Banach coorbit spaces coincide 
   with the Paley-Wiener spaces, 
\[
\operatorname{Co}(L_p(\R)) =    B^p_\Omega . 
\]   
 Therefore, we observe the interesting fact that
\[
\operatorname{Co}(Y)= \bigcap_{p>1}B^p_\Omega .
 \]
Consequently, although the discretization of $ \operatorname{Co}(L_p(\R)) =    B^p_\Omega $  turned out to be rather complicated, see
 again \cite{DDDSSTV} for details, an atomic decomposition for their intersection can be 
 constructed in quite a natural way.
 \end{itemize}
 \end{remark} 
 }

{ From now on $w =m=1$.}  We choose $Q=-[1,1]$, so that the $Q$-dense set  is simply given by the integers $\mathbb Z$. The associated partition of unity can be constructed by the integer translates of the centralized cardinal  B-spline 
 $N_2:=(\chi_{[0,1)} \ast \chi_{[0,1)})(\cdot +1)$, i.e, $\phi_k=N_2(\cdot -k), k\in \mathbb Z$. { We observe {\it en passant}  that of course  $\widehat{\phi}(\xi)=\sinc^2(\pi\xi)$.}
\subsubsection{Injectivity}
The first step is to show that 
 \[
 J_\phi = \sum_{i\in\II}\langle F,\phi_i \rangle L_{g_i}K
 \]
 is injective.  Suppose that
 \[
 0=J_\phi(F) = \sum_{k\in \mathbb Z}\langle F,\phi_k\rangle  \frac{\sin (2 \pi (\cdot -k))}{2 \pi (\cdot -k)}\quad \text{for some}\quad F \in {{\mathcal M}^Y}.
 \]
 Applying  the Fourier transform yields
\[
0= \sum_{k \in \mathbb Z}\langle F, \phi_k \rangle{\rm e}^{-2 \pi ik \xi}\chi_{[-1,1)} (\xi)\frac{1}{2}, \quad \text{for almost all}
 \quad \xi \in \mathbb R,
 \]
 and therefore   
\[
\langle  F, \phi_k\rangle =0 \quad \text{for  all} \quad k \in \mathbb Z.
\]
 Then, by using the reproducing kernel property and Plancherel's  theorem  we get
 \begin{eqnarray} 
 0&=& \langle F, \phi_k \rangle =\langle  F \ast K, \phi_k \rangle 
 = \langle \widehat{F\ast K}, \widehat{\phi_k}\rangle \\
 &=& \langle \hat{F} \hat{K}, \hat{\phi} {{\rm e}^{-2\pi i k(\cdot)}} \rangle
 =\int_{-1}^1{\hat{F}(\xi) (\sinc\,\xi)^2{\rm e}^{2\pi i k\xi}{\rm d}\xi}.
  \end{eqnarray}
  This means that all Fourier coefficients  of ${\hat{F}(\xi)(\sinc\,\xi)^2|_{[-1, 1] }}$ vanish,  so that  
  ${\hat{F}(\xi) (\sinc\,\xi)^2|_{[-1, 1] }}=0$. But since  $(\sinc\,\xi)^2$  is  positive and $F$ is band-limited,  $\hat{F}$ is zero, so that finally  $F=0$. Therefore,  $J_{\phi}$  is injective.
  
  
 
 \begin{remark} \label{ discussion2}
 From the above calculations, it it clear that also the case of a finer lattice, say  $h \mathbb Z$, can be handled in a similar fashion. 
 Moreover, other values of  $\omega$ are no problem.
  \end{remark} 
  
\subsubsection{Continuity  of the Left Inverse}

In order to apply Theorem \ref{mainresult}, we have also to verify that the left inverse 
 ${ J}_{\Phi}^{-1}$ is continuous. To this end, we consider a specific partition of unity $\Phi$ and therewith a concrete, sufficiently dense sampling setting. For this specific case we will be able to provide an explicit description of ${J}_{\Phi}$ enabling us to derive a bound for ${ J}_{\Phi}^{-1}$.

The partition of unity is given as above but for more flexibility we introduce a variable interval width $\tau$, { and write
\[
\varphi(x) = \chi_\tau * \chi_\tau (x),
\]
where
\[
\chi_\tau(x) =\begin{cases}
1&\text{if  }x\in[-\tau/2,\tau/2]\\
0 &\text{elsewhere},
\end{cases}
\]
so that clearly $\supp(\varphi) = [-\tau,\tau]$.}
We define the partition of unity by normalizing $\varphi$, namely
\[
\phi_\tau(x) := \frac{1}{\tau}\varphi(x) = \frac{1}{\tau} \chi_\tau * \chi_\tau (x).
\]
Let now
\[
\phi_k(x) := \phi_\tau(x-g_k).
\]
Then, upon setting $g_k:=\tau k$, we have
\[
\sum_{k\in\Z}\phi_k(x) \equiv 1~
\]
and thus, the family $\{\phi_k\}_{k\in\II}$ forms a partition of unity. It  follows that
\[
\cF(\phi_k)(\xi) =  \cF(\phi_\tau)(\xi){\rm e}^{-2\pi i g_k\xi} = \tau \sinc^2(\tau\xi){\rm e}^{-2\pi i g_k\xi}.
\]
We with help of Theorem \ref{littlehelp} we have
\[
J_\phi F(b) = \sum_{k\in\II} \langle F, \phi_k\rangle L_{g_k} K (b)= 
\sum_{k\in\II} \langle F, \phi_k\rangle K(b - g_k) = 
\sum_{k\in\II} \langle F, \phi_k\rangle\, 2\omega\,\sinc(2\omega\, (b- g_k))~.
\]
The choice of the partition of unity (grid density) has a significant impact on { the boundedness of $J_\Phi^{-1}$}. We pick  
$\tau=\frac{1}{2\omega}$, which implies  $g_k=\frac{k}{2\omega}$. { Notice that a grid with e.g. $\tau=1/\omega$ would not yield the desired result, as  a consequence of the Poisson summation  formula}. Our specific choice leads to
\[
\widehat{\phi_k}(\xi)= \frac{1}{2\omega}\sinc^2\left(\frac{1}{2\omega}\xi\right){\rm e}^{-2\pi i \frac{k}{2\omega}\xi}
\]
and therefore, as 
$e_k(\xi):=\frac{1}{\sqrt{4\omega}}{\rm e}^{-2\pi i \frac{k}{2\omega}\xi}$ forms
 an orthogonal basis for $L_2(\Omega)$, { which is actually not normalized because $\|e_k\|^2_{L_2(\Omega)}=1/2$}, we obtain
\[
2\omega\langle F,\phi_k\rangle = \int_{-\omega}^\omega 
\underbrace{\hat F (\xi)\sinc^2\left(\frac{1}{2\omega}\xi\right)}_{=:\hat{\tilde F}(\xi)}
{{\rm e}^{2\pi i \frac{k}{2\omega}\xi}{\rm d}\xi} = 
 \tilde F\left(\frac{k}{2\omega}\right).
 \]
It follows, similarly to the Shannon-Nyquist sampling theorem, that 
\[
J_\phi F(b) =\sum_{{k\in\Z}} \tilde F\left(\frac{k}{2\omega}\right)\,\sinc\left(2\omega\, \left(b- \frac{k}{2\omega}\right)\right) = 2 \tilde F(b) = 4\omega\,( F*\phi_{\frac{1}{2\omega}})(b)
\]
or, equivalently,
\[
\widehat{J_\Phi F} = 4\omega\, \hat F \,\widehat{\phi_{\frac{1}{2\omega}}}.
\]
As $F\in B_\Omega^2$ we have that $\tilde F\in B_\Omega^2$ and hence  $J_\Phi F \in B_{\Omega}^2$, and therefore, 
\begin{equation}
\hat F = \frac{1}{4\omega}\left(\chi_{[-\omega,\omega]}|\widehat{\phi_{\frac{1}{2\omega}}}|\right)^{-2}
\overline{\widehat{\phi_{\frac{1}{2\omega}}}}\,\,\widehat{J_\Phi F}~,\label{Fourierofleftinverse}
\end{equation}
which exists since $|\widehat{\phi_{\frac{1}{2\omega}}}|^2$ has its zeros outside $[-\omega,\omega]$. ({ Observe that for this argument the denser sampling is necessary}). Due to \eqref{Fourierofleftinverse} the (left/right) inverse $J_\Phi^{-1}$  of some $G\in B_\Omega^2$  can be expressed as a convolution of 
the form
\begin{eqnarray}
J_\Phi^{-1}G &=& {{\cF}^{-1}}
\left(\frac{1}{4\omega}\left(\chi_{[-\omega,\omega]}|\widehat{\phi_{\frac{1}{2\omega}}}|\right)^{-2}
\overline{\widehat{\phi_{\frac{1}{2\omega}}}}\right)*G\\
&=&  \frac{1}{4\omega}{{\cF}^{-1}}
\left(\left(\chi_{[-\omega,\omega]}|\widehat{\phi_{\frac{1}{2\omega}}}|\right)^{-2}\right)\,*\,\overline{\phi_{\frac{1}{2\omega}}(-\,\cdot)}*G~.
\end{eqnarray}
Applying the generalized Young inequality \eqref{generalizedYoung} yields for $1+1/r = 1/p+1/q$
and $1+1/p = 1/t+1/s$
\begin{eqnarray}
\|J_\Phi^{-1}G\|_{L_r} 
&\leq&  \frac{1}{4\omega}\left\|{\cF}^{-1}
\left(\left(\chi_{[-\omega,\omega]}|\widehat{\phi_{\frac{1}{2\omega}}}|\right)^{-2}\right)\right\|_{L_t}\,
\left\|\phi_{\frac{1}{2\omega}}\right\|_{L_s}\, \|G\|_{L_q}~.
\end{eqnarray}
We observe by the support  properties  of $\phi_{\frac{1}{2\omega}}$ that 
\[
\left\|\phi_{\frac{1}{2\omega}}\right\|_{L_s}\leq (4\omega)^{1/s}
\]
and if $t>1$ that (see Appendix \ref{boundedness})
\begin{equation}
\left\|{\cF}^{-1}
\left(\left(\chi_{[-\omega,\omega]}|\widehat{\phi_{\frac{1}{2\omega}}}|\right)^{-2}\right)\right\|_{L_t}\leq C<\infty~.\label{dickeresBrett}
\end{equation}
Therefore, we finally obtain
\[
\|J_\Phi^{-1}G\|_{L_r} \leq \frac{ (4\omega)^{1/s}}{4\omega}\,C\,\|G\|_{L_q}
\]
for which we have the relation $2+1/r = 1/t+1/s+1/q$. As $\phi_{\frac{1}{2\omega}}$ belongs also to $ L_1$ it can be simplyfied to $1+1/r = 1/t+1/q$ or equivalenty $r = tq/(t+q-tq)$. For each $r>1$ we find some $q>1$ while fulfilling $t>1$. Setting $t=q=1+\varepsilon$, we have $r=(1+\varepsilon)/(1-\varepsilon)>1$ while ensuring $0<\varepsilon<1$ and we obtain $\varepsilon = (r-1)/(r+1)$ and hence $q = 2r/(r+1)>1$.

 Now,  we  obtain a first  intermediate result:  the existence and continuity of the left inverse imply by  Theorem \ref{mainresult} 
 the existence of an  expansion of the form \eqref{hurra} .

  \subsubsection{Atomic Decomposition}
 The next step is to establish  the atomic decomposition property.  Due to Theorem  \ref{mainresult} , we have to show that the 
 $L_p$-norms of the $Q$-oscillation of $J_{\phi} ^{-1}K$  are finite. To this end, we prove that this property holds for {\em all}
  elements in the reproducing kernel  space.  We start with some useful observations. 
\begin{itemize}
\item[i)] 
Let $F\in{\cM^Y}$, then $F\in C^\infty$, since
\begin{equation} \label{useful1}
{\frac{{\rm d}^n}{{\rm d} x^n}F= \frac{{\rm d}^n}{{\rm d} x^n}(F*K)=F*\frac{{\rm d}^n}{{\rm d} x^n}K}
\end{equation}
\item[ii)]
 In Appendix  \ref{appendix2} , we show that  the Shannon kernel satisfies
\begin{equation} \label{useful2}
{\frac{{\rm d}^n}{{\rm d} x^n}}K\in L_p~,~~~~~\text{for all} \quad p>1~.
\end{equation}
\item[iii)]
 Then ${\frac{{\rm d}^n}{{\rm d} x^n} F\in \cM^Y}$, since 
\[
{\frac{{\rm d}^n}{{\rm d} x^n }F=\frac{{\rm d}^n}{{\rm d} x^n} (F*K) 
= \left(\frac{{\rm d}^n}{{\rm d} x^n} F\right) * K},
\]
and, , by the Young  inequality, 
\begin{equation} \label{useful3}
\left\|{\frac{{\rm d}^n}{{\rm d} x^n} }F\right\|_{L_r}=\left\|F*{\frac{{\rm d}^n}{{\rm d} x^n} }K\right\|_{L_r}
\lesssim \|F\|_{L_p}\left\|{\frac{{\rm d}^n}{{\rm d} x^n} }K\right\|_{L_q}, \quad 1/p+1/q=1+1/r,
\end{equation}
 and  for any $r>1$, we find $p,q$ such that $1/p+1/q=1+1/r.$
\end{itemize}
Therefore,  by combining   \eqref{useful1},  \eqref{useful2} and   \eqref{useful3}, we obtain
\begin{equation} \label{useful4}
{\cM^Y\subset\bigcap_{p>1}\bigcap_{k\geq0} W^k_p}  
\end{equation} 
where  {$W^k_p$ denotes the $L_p$-Sobolev space of  smoothness $k$}. 
We know that 
\[
\|\text{osc}_Q(F)\|_{L_p}<\infty
\]
if 
\[
F\in {\cM_Q^\rho(L_{p,w})}=\left\{ F\in L_0 (G)~\vert~~{\|F\|_{L_{\infty}(Q\cdot )}}\in L_{p,w}\right\}~,
\]
 see \cite{DDDSSTV} p.86, Lemma 4.3. (fortunately, it is not needed that  $\|\text{osc}_Q(F)\|_{L_p}$  is small, it only has to be
  finite).
We need to show ${\|F\|_{L_{\infty}(Q\cdot )}}\in L_{p}$.  To this end,  we use the 
Sobolev embedding  theorem as illustrated, in the DeVore-Triebel diagram:
\begin{center}
\setlength{\unitlength}{1mm}
\begin{picture}(50,50)
\put(0,0){\vector(0,1){40}}
\put(-5,40){$s$}
\put(0,0){\vector(1,0){50}}
\put(50,-5){$\frac{1}{p}$}
\drawline(0,0)(35,35)
\put(35,30){$s=1/p$}
\put(35,25){Sobolev embedding line}
\put(-10,-5){$(0,0)\hat{=} L_\infty$}
\drawline(15,1)(15,-1)
\put(14,-5){$\frac{1}{2}$}
\drawline(30,1)(30,-1)
\put(29,-5){$1$}
\put(10,40){\vector(0,-1){25}}
\put(12,40){compact embedding}
\end{picture}
\end{center}
\ \\[1cm]
Here,  the point $(1/p,s)$ corresponds to the Besov space $B_{p,p}^s$, where $B_{p,p}^s=W_p^s$ for $s\not\in \NN$.
We see that for all $p>1$ the spaces $W^2_p$ embed into $L_\infty$. Therefore we get for any function $F$ in the reproducing kernel space
{
\begin{eqnarray}
\int\left(\|F\|_{L_{\infty}(Q\cdot x)}\right)^p {\rm d}x &\lesssim&
\int\left(\|F\|_{W^2_p(Q\cdot x)}\right)^p {\rm d}x\\
&\lesssim& \int\sum_{k=0}^2\left\|\frac{{\rm d}^k}{{\rm d}x^k}F\right\|^p_{L_p(Q\cdot x)} {\rm d}x \\
&=& \sum_{k=0}^2\int\int_{Q\cdot x}\left|\frac{{\rm d}^k}{{\rm d}x^k}F\right|^p {\rm d}u {\rm d}x \\
&=& \sum_{k=0}^2\int\int_{Q}\left|\frac{{\rm d}^k}{{\rm d}x^k}F(u+x)\right|^p {\rm d}u {\rm d}x\\
&=& \sum_{k=0}^2\int_{Q}\int_{\RR}\left|\frac{{\rm d}^k}{{\rm d}x^k}F(u+x)\right|^p {\rm d}x {\rm d}u\\
&=& \int_{Q} \|F\|^p_{W^2_p} {\rm d}u = \|F\|^p_{W^2_p}  \int_{Q} {\rm d}u <\infty~.
\end{eqnarray}
}
This is true for all ${F\in\cM^Y}$. But $J^{-1}_\phi K$ is contained in ${\cM^Y}$ and therefore,
$$\|{\rm osc}_Q (J_\phi^{-1}K)\|_{L_{q'}}<\infty,$$
and we are done.

{
\begin{remark} \label{ discussion4}
As mentioned earlier, we are now in a position to show how to handle the case $w=m$.   We know that
\[
 \norm{{\rm osc}_Q(F)}_{L_{p,w}}<\infty
 \]
 if $F\in\cM_Q^\rho(L_{p,w})$.
Using as before the Sobolev embedding theorem, we get
\begin{eqnarray*}
\left\| \| F\|_{L_{\infty}(Q\cdot )}\right\|_{L_{p,w}}&=&\int_\RR\| F\|_{L_{\infty}(Q\cdot x)}^p w(x)^p {\rm d}x\\
&\lesssim& \sum_{n=0}^2 \int_\RR\int_Q \left| \frac{{\rm d}^n }{{\rm d}x^n}F(u+x)\right|^p {\rm d}u\, w(x)^p {\rm d}x\\
&\lesssim& \sum_{n=0}^2 \int_Q\int_\RR \left| \frac{{\rm d}^n }{{\rm d}x^n}F(u+x)\right|^p  w(x)^p {\rm d}x\, {\rm d}u\\
&=& \sum_{n=0}^2 \int_Q\int_\RR \left| \frac{{\rm d}^n }{{\rm d}x^n}F(x')\right|^p  w(x'-u)^p {\rm d}x'\, {\rm d}u\\
&\lesssim& \int_Q w(-u)^p {\rm d}u \sum_{n=0}^2\int_\RR \left| \frac{{\rm d}^n }{{\rm d}x^n}F(x')\right|^p w(x')^p   {\rm d}x'\\
&\leq& \int_Q w(-u)^p {\rm d}u\,\, \|F\|^p_{W^2_{p,w}}.
\end{eqnarray*}
The weighted Young inequality, applied to  the case $m=w$,  implies that
$$\|F\|^p_{W^2_{p,w}}<\infty ~~~\forall F\in \cM^Y~.$$
Since $J^{-1}_\phi K \in \cM^Y$, the result follows.
 \end{remark} 
 }
\subsection{Modulation Spaces}\label{sec:modulation}
Modulation spaces are based on the translation and modulation operators on functions, { namely}
$$
T_a f(t) = f(t-a),\quad M_bf(t) = \mathrm{e}^{2\pi \i bt} f(t)
$$
with corresponding Fourier transforms
$$
\widehat{T_a f}   = M_{-a} \hat f, \quad 
\widehat{M_b f}   = \hat f (T_b \cdot) .
$$
The \emph{reduced Heisenberg group} $\mathbb H_r$ is the locally compact { group}
 $\mathbb H_r = \R^{2d} \times \mathbb T$ 
with multiplication and inversion
\[
(x,\omega,z) (x',\omega',z') = (x+x',\omega + \omega', z z' \mathrm{e}^{i \pi(x' \omega - x \omega')}),
\quad 
(x,\omega,z)^{-1} = (-x,-\omega,\bar z).
\]
The {(group)} convolution of  {$f,g\in L_1(\mathbb H_r)$} is given by
\begin{align*}
(f * g) (x,\omega,z) 
&=
\int_{ \mathbb H_r}
f(x',\omega',z')g\left((x',\omega',z')^{-1} (x,\omega,z) \right) \d x'\d \omega' \d z' 
\\
&=
\int_{ \mathbb H_r}
f(x',\omega',z') g\left(x-x',\omega - \omega',z \overline{z'} \mathrm{e}^{\pi \i (x' \omega - \omega'x) } \right) \d x'\d \omega' \d z'.
\end{align*}
Introducing the { mapping $j\colon{\mathbb C}^{\R^2}\to{\mathbb C}^{\mathbb H_r}$ 
defined on $F\colon\R^2\to{\mathbb C}$ by}
\[
(j F)(x,\omega,z) = \bar z \mathrm{e}^{\pi \i x\omega} F(x,\omega),
\]
{ which induces an isometry from $L_2(\R^2)$ into $L_2(\mathbb H_r)$ 
the convolution can be rewritten as}
\begin{align}
(j F * jG) (x,\omega,z) 
&=
\int_{ \mathbb H_r}
\overline{z'}
\mathrm{e}^{ \pi \i x' \omega'} F(x',\omega') G(x-x', \omega - \omega') 
\bar z z' \mathrm{e}^{- \pi \i( x' \omega - \omega'x)}
\mathrm{e}^{\pi \i (x-x') (\omega - \omega')}  \d x'\d \omega' \d z' \, 
\\
&=
\mathrm{e}^{ \pi \i x \omega} \bar z 
\int_{\R^2} F(x',\omega') G (x-x', \omega - \omega')
\mathrm{e}^{2\pi \i x' (\omega' - \omega) } \d x'\d \omega' 
\\
&=
j(F \odot G) (x,\omega,z)
\end{align}
where {  $F \odot G$ is given, for $F,G\in L_1({\mathbb R}^2)$, by}:
\[
(F \odot G) (x,\omega) = \int_{\R^2} F(x',\omega') G(x-x', \omega - \omega')
\mathrm{e}^{2\pi \i x'( \omega' -  \omega) } \d x' \d \omega'.
\]
{ Therefore { the mapping $j$ intertwines the group convolution on $\mathbb H_r$  with $\odot$, that is}
\[
jF*jG=j(F \odot G)
\]
for, say, $F,G\in L_1({\mathbb R}^2)$.} The Schr\"odinger representation $\pi: \mathbb H_r \to \mathcal U(L_2(\R))$ is the unitary representation given by
$$
(\pi_{(x,\omega,z)} f) (t) = z \mathrm{e}^{-\pi \i x \omega} \mathrm{e}^{2 \pi \i t \omega} f(t-x).
$$
The corresponding voice transform is
\begin{align}
{ V_g(f) (x,\omega,z)} 
&= \langle f, \pi_{(x,\omega,z)}g \rangle 
= 
\int_{\R} f(t)  \overline{z  \mathrm{e}^{-\pi \i x \omega}  \mathrm{e}^{2\pi \i \omega t} {g(t-x)}}\,\d t\\
&=\bar z \mathrm{e}^{\pi \i x \omega} \int_{\R}  f(t)  \overline{{\mathrm{e}^{2\pi \i \omega t}g(t-x)}}\,\d t \\
&=j ({U_g f) (x,\omega,z)},
\end{align}
where
\[
{U_g(f) (x,\omega)} = \int_{\R} f(t) \overline{{\mathrm{e}^{2\pi \i \omega t}g(t-x)}}\, \d t 
={\langle f,M_\omega T_x{g}\rangle}.
\]
For the function $g =\chi_{[-\frac12,\frac12]}={\check{g}}$, 
we consider the reproducing kernel
$$
K(x,\omega) = {U_g g(x,\omega)} = \int_{\R} \underbrace{g(t) \overline{g(t-x)} }_{h_x(t)}
\mathrm{e}^{- 2\pi \i t \omega }\, \d t 
= { \widehat{h_x}(\omega)}.
$$
Its Fourier transform is given by
\begin{align} 
\hat K (\xi, \eta) 
&= \int_{\R^2}  \hat h_x(\omega) 
\mathrm{e}^{-2 \pi \i x \xi }\mathrm{e}^{-2 \pi \i \eta \omega }\, \d x \d \omega
= 
\int_{\R} h_x(-\eta) \mathrm{e}^{-2 \pi \i x \xi }\,\d x
\\
&=
\int_{\R}  g (-\eta) \overline{g(-\eta - x) }\mathrm{e}^{-2 \pi \i x \xi }\,\d x 
= 
g (-\eta) \int_{\R}  \overline{ g(x -\eta) }\mathrm{e}^{2 \pi \i x \xi }\, \d x
\\
&=
g (-\eta) \, \widehat{{\overline g}}(\xi) \, \mathrm{e}^{2 \pi \i \eta \xi } 
\\
&=
\chi_{[-\frac12,\frac12]}(\eta) \, \mathrm{sinc} (\xi) \, \mathrm{e}^{2\pi \i \eta \xi}. \label{kernel}
\end{align}
We have that $K \in L_p(\R^2)$ { for all} $p >1$, but $K \notin L_1(\R^2)$.

\begin{remark}
  From a practical point of view, our choice of the window function g
  might not be optimal. It possesses a very good time localization,
  but the frequency localization is bad. Very often, one strives fo a
  compromize and uses, e.g., the Gaussian window.  Nevertheless, as a
  test example for our theory, our choice is fine.
\end{remark}

Let
$$\mathcal M = \{F \in L_2(\R^2): F \odot K = F\}.$$

We will need the Fourier transform of functions in $\mathcal M$.

\begin{lemma}\label{lem1}
For every $F\in\mathcal M$ it holds that
\[
\widehat{F \odot K} (\xi, \eta) 
= \mathrm{sinc} (\xi) \mathrm{e}^{2\pi \i \xi \eta} \int_{\R}\hat F(\xi',\eta) \,\mathrm{sinc}( \xi') \mathrm{e}^{-2\pi \i \xi' \eta }\,\d \xi' .
\]
\end{lemma}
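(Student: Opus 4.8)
The strategy is to compute $\widehat{F\odot K}$ directly from the definition of $\odot$, and then to substitute the known explicit form of $\hat K$ from~\eqref{kernel}. Since $F\in\mathcal M$ means $F = F\odot K$, we already know $\widehat{F\odot K}=\hat F$, but the point of the lemma is to produce the \emph{particular} integral representation on the right-hand side, which exhibits the convolution-in-$\xi$/pointwise-in-$\eta$ structure that will be exploited later. So I would not use $F=F\odot K$ at the outset; I would treat $F\odot K$ as an honest function and Fourier transform it.

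\textbf{Step 1: Fourier transform of $\odot$.} Recall
\[
(F\odot G)(x,\omega)=\int_{\R^2}F(x',\omega')\,G(x-x',\omega-\omega')\,\mathrm{e}^{2\pi\i x'(\omega'-\omega)}\,\d x'\,\d\omega'.
\]
The plan is to apply the two-dimensional Fourier transform in $(x,\omega)\mapsto(\xi,\eta)$ and interchange the order of integration (justified since $F,G\in L_1$, or by a density argument from $L_1\cap L_2$). After the substitution $x\mapsto x-x'$, $\omega\mapsto\omega-\omega'$ inside the Fourier integral, the exponential $\mathrm{e}^{2\pi\i x'(\omega'-\omega)}$ combines with $\mathrm{e}^{-2\pi\i(x\xi+\omega\eta)}$ to produce, after collecting terms, a factor $\hat F$ evaluated at a shifted frequency argument times $\hat G$. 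Concretely I expect to arrive at
\[
\widehat{F\odot G}(\xi,\eta)=\int_{\R}\hat F(\xi',\eta)\,\hat G(\xi-\xi',\,\eta-\xi')\,\mathrm{e}^{-2\pi\i\xi'(\eta-\xi')}\,\d\xi'
\]
or a close variant; the precise phase must be tracked carefully. This twisted-convolution formula for $\widehat{\,\cdot\odot\cdot\,}$ is the technical heart of the lemma.

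\textbf{Step 2: Insert $G=K$.} Now substitute $\hat K(\xi,\eta)=\chi_{[-1/2,1/2]}(\eta)\,\mathrm{sinc}(\xi)\,\mathrm{e}^{2\pi\i\eta\xi}$ from~\eqref{kernel} into the formula from Step 1. With $\hat K(\xi-\xi',\eta-\xi')$, the indicator $\chi_{[-1/2,1/2]}(\eta-\xi')$ appears; but one should check whether, in the intended range, it can be absorbed — more likely the formula is meant with the full integral and the $\mathrm{sinc}(\xi-\xi')$ vs. $\mathrm{sinc}(\xi')$ discrepancy in the stated result signals that a change of variable $\xi'\mapsto\xi-\xi'$ is performed at the end, together with collecting all the exponential phases into the single clean factor $\mathrm{sinc}(\xi)\,\mathrm{e}^{2\pi\i\xi\eta}$ pulled out front. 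After that bookkeeping, the remaining integrand is exactly $\hat F(\xi',\eta)\,\mathrm{sinc}(\xi')\,\mathrm{e}^{-2\pi\i\xi'\eta}$, as claimed.

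\textbf{Main obstacle.} The only real difficulty is the phase algebra: keeping track of all the quadratic/bilinear exponential factors coming from the definition of $\odot$, from $\hat K$, and from the two changes of variables, and verifying they telescope down to precisely $\mathrm{sinc}(\xi)\mathrm{e}^{2\pi\i\xi\eta}$ outside and $\mathrm{sinc}(\xi')\mathrm{e}^{-2\pi\i\xi'\eta}$ inside — with no leftover cross term in $\xi\xi'$ or $\eta^2$. A clean way to organize this is to note that $j$ intertwines $*$ on $\mathbb H_r$ with $\odot$ (as established in the excerpt), so $\widehat{F\odot K}$ can alternatively be read off from $\widehat{jF * jK}$ using the group Fourier/convolution structure; but the direct computation sketched above is elementary and self-contained, so I would present that. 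Fubini is harmless here since we work with $L_1$ (or $L_1\cap L_2$) functions, and no deep tool is needed.
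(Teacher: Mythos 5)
Your overall strategy --- Fourier transforming the $\odot$-product directly and then inserting the explicit form of $\hat K$ from~\eqref{kernel} --- is the same as the paper's, but the key identity you posit in Step~1 is structurally wrong, and the ``bookkeeping'' of Step~2 cannot repair it. Carrying out the substitution $x\mapsto x+x'$, $\omega\mapsto\omega+\omega'$ in the Fourier integral, the twist $\mathrm{e}^{2\pi\i x'(\omega'-\omega)}$ contributes a factor $\mathrm{e}^{-2\pi\i x'\omega}$ in the new variables, and what actually comes out is
\[
\widehat{F\odot G}(\xi,\eta)=\int_{\R^2}\hat G(\xi,\,x'+\eta)\,F(x',\omega')\,\mathrm{e}^{-2\pi\i x'\xi}\,\mathrm{e}^{-2\pi\i\omega'\eta}\,\d x'\,\d\omega'.
\]
This is \emph{not} a convolution of $\hat F$ and $\hat G$ in the frequency variables: the second argument of $\hat G$ is translated by the \emph{spatial} variable $x'$ of $F$. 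Your proposed formula $\int_{\R}\hat F(\xi',\eta)\,\hat G(\xi-\xi',\eta-\xi')\,\mathrm{e}^{-2\pi\i\xi'(\eta-\xi')}\,\d\xi'$ does not hold, and no ``close variant'' of that shape can lead to the stated result: in the target identity $\xi$ occurs only in the prefactor $\mathrm{sinc}(\xi)\mathrm{e}^{2\pi\i\xi\eta}$ and nowhere inside the $\xi'$-integral, whereas any convolution-type formula carries a factor $\mathrm{sinc}(\xi-\xi')$ coupling $\xi$ to the integration variable; the substitution $\xi'\mapsto\xi-\xi'$ you invoke would merely move that coupling into $\hat F(\xi-\xi',\eta)$, not eliminate it.

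The missing idea is the mechanism by which the $\xi$-dependence decouples. Inserting $\hat K(\xi,x'+\eta)=\chi_{[-\frac12,\frac12]}(x'+\eta)\,\mathrm{sinc}(\xi)\,\mathrm{e}^{2\pi\i(x'+\eta)\xi}$ into the display above, the phase $\mathrm{e}^{2\pi\i x'\xi}$ cancels against $\mathrm{e}^{-2\pi\i x'\xi}$, so $\mathrm{sinc}(\xi)\mathrm{e}^{2\pi\i\xi\eta}$ factors out of the integral entirely; the remaining integral $\int F(x',\omega')\chi_{[-\frac12,\frac12]}(x'+\eta)\mathrm{e}^{-2\pi\i\omega'\eta}\,\d x'\d\omega'$ is then converted, after the shift $x'\mapsto x'-\eta$ and Parseval in the $x'$-variable (turning $\chi_{[-\frac12,\frac12]}$ into $\mathrm{sinc}$), into $\int_{\R}\hat F(\xi',\eta)\mathrm{sinc}(\xi')\mathrm{e}^{-2\pi\i\xi'\eta}\,\d\xi'$. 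Equivalently, the correct general identity is
\[
\widehat{F\odot G}(\xi,\eta)=\int_{\R}\hat F(\xi',\eta)\,(\mathcal{F}_xG)(\xi,\xi'-\xi)\,\mathrm{e}^{2\pi\i\eta(\xi-\xi')}\,\d\xi',
\]
with the \emph{partial} Fourier transform of $G$ in the first variable, and for $G=K$ one has $(\mathcal{F}_xK)(\xi,\omega)=\mathrm{sinc}(\xi)\,\mathrm{sinc}(\xi+\omega)$, whose product structure is exactly what lets $\mathrm{sinc}(\xi)$ escape the integral. Without identifying this step your argument cannot be completed as sketched.
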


\begin{proof}
Since $F \in \mathcal M$, we have 
\begin{align}
\hat F(\xi,\eta) &= \widehat{F \odot K} (\xi, \eta) 
\\
&=
\int_{\R^2} \int_{\R^2}  F(x',\omega') K(x-x',\omega - \omega')  \mathrm{e}^{2\pi \i x'(\omega' - \omega)}
 \mathrm{e}^{-2\pi \i x\xi}  \mathrm{e}^{-2\pi \i \omega \eta}\, \d x \d \omega \d x' \, \d \omega' 
\\
&=
\int_{\R^2}  \Bigl( 
\int_{\R^2} K(x-x',\omega - \omega')  \mathrm{e}^{-2\pi \i x' \omega}
 \mathrm{e}^{-2\pi \i x\xi}  \mathrm{e}^{-2\pi \i \omega \eta}\,\d x \d \omega\Bigr)\mathrm{e}^{2\pi \i x' \omega'} F(x',\omega')\,\d x' \d \omega' \,
\\
&=
\int_{\R^2}  
 \Bigl(\int_{\R^2}  K(x,\omega)  
  \mathrm{e}^{-2\pi \i  x \xi}
  \mathrm{e}^{-2\pi \i  \omega(x' + \eta)}\,\d x \d \omega\Bigr)
  F(x',\omega')
 \mathrm{e}^{-2\pi \i x' \xi}
 \mathrm{e}^{-2\pi \i \omega' \eta}\, \d x' \d \omega'
 \\
&= \int_{\R^2}   \hat K(\xi, x'+ \eta)F(x',\omega')
 \mathrm{e}^{-2\pi \i x' \xi}
 \mathrm{e}^{-2\pi \i \omega' \eta}\, \d x' \d \omega'. 
\end{align}
{ Using now  \eqref{kernel}, we obtain}
\begin{align}
\hat F(\xi,\eta) 
&=
\mathrm{sinc} (\xi) \mathrm{e}^{2\pi \i \xi \eta } 
\int_{\R^2} F(x',\omega') \chi_{[-\frac12,\frac12]} (x' + \eta) \mathrm{e}^{-2\pi \i  \eta \omega'}\,\d x' \d \omega' 
\\
&=
\mathrm{sinc} (\xi) \mathrm{e}^{2\pi \i \xi \eta } 
\int_{\R^2} F(x'- \eta,\omega') \chi_{[-\frac12,\frac12]} (x') \mathrm{e}^{-2\pi \i  \eta \omega'}\, \d x' \d \omega' 
\\
&=
\mathrm{sinc} (\xi) \mathrm{e}^{2\pi \i \xi \eta } 
\int_{\R}   \mathcal F_\omega F(x' - \eta,\eta) \chi_{[-\frac12,\frac12]} (x') \d x'
\\
&=
\mathrm{sinc} (\xi) \mathrm{e}^{2\pi \i \xi \eta} 
\int_{\R^2}  
\chi_{[-\frac12,\frac12]} (x') \, \mathrm{e}^{2\pi \i \xi' (x'- \eta)} \hat F(\xi',\eta)\,\d x' d\xi' 
\\
&=
\mathrm{sinc} (\xi) \mathrm{e}^{2\pi \i \xi \eta}
\int_{\R} \hat F(\xi',\eta) \,\mathrm{sinc}( \xi') \mathrm{e}^{-2\pi \i \xi' \eta }\, \d \xi',
\end{align}
{ as desired}.
\end{proof}

\begin{remark}
  There is a remarkable difference from the results  for the Shannon case.     There, the resulting coorbit spaces according to Theorem 4 could be identified 
as intersections of well-known spaces, namely the Paley-Wiener spaces. Here, the coorbit spaces are  {\em not}   intersections of classical modulation spaces, simply because
for the non-integrable case these spaces do not exist. We end up with a really  new class of spaces. 
\end{remark}

\subsubsection{Atomic decomposition}\label{sec:modulation1}
We are finally in a position to prove the injectivity of $J_\phi$, as
well as the $L_p$--boundedness of the $Q$-oscillation for $p>1$. The
continuity of the left inverse  will be the topic of  future research.

We choose $Q = (-\frac12,\frac12)^2$ and $\mathbb Z^2$ as $Q$-dense set { and write}
\[
\phi(x,\omega) = \chi_{[-\frac12,\frac12]}(x) \chi_{[-\frac12,\frac12]}(\omega), 
\quad
\phi_{k,l}(x,\omega) = \phi(x-k,\omega-l).
\]

\begin{proposition}
The operator $J_\phi\colon \mathcal M \to \mathcal M$ defined by 
$$
J_\phi (F) = \sum_{k,l \in \Z} \langle F,\phi_{k,l} \rangle K(x+k,\omega+ l)
$$
is injective.
\end{proposition}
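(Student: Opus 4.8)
The plan is to argue exactly as in the Shannon case, passing to the Fourier side and exploiting the explicit form of the kernel $\hat K$ in~\eqref{kernel}. Suppose $F\in\mathcal M$ with $J_\phi(F)=0$. Since $\phi_{k,l}=\phi(\cdot-k,\cdot-l)$ is the product of unit-width box functions, the coefficients $c_{k,l}:=\langle F,\phi_{k,l}\rangle$ are integrals of $F$ over unit squares, and in particular they are bounded (indeed $F\in L_2(\R^2)$, so $(c_{k,l})\in\ell_2(\Z^2)$). First I would apply the two-dimensional Fourier transform to the identity $\sum_{k,l}c_{k,l}K(\cdot+k,\cdot+l)=0$ and use that $\widehat{K(\cdot+k,\cdot+l)}(\xi,\eta)=\mathrm{e}^{2\pi\i k\xi}\mathrm{e}^{2\pi\i l\eta}\hat K(\xi,\eta)$ together with~\eqref{kernel}. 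This yields, for a.e.\ $(\xi,\eta)$,
\[
0=\chi_{[-\frac12,\frac12]}(\eta)\,\mathrm{sinc}(\xi)\,\mathrm{e}^{2\pi\i\eta\xi}\sum_{k,l\in\Z}c_{k,l}\,\mathrm{e}^{2\pi\i k\xi}\mathrm{e}^{2\pi\i l\eta}.
\]
On the strip $\eta\in(-\frac12,\frac12)$ and for $\xi$ off the zero set of $\mathrm{sinc}$, the prefactor is nonzero, so the doubly $1$-periodic function $\sum_{k,l}c_{k,l}\mathrm{e}^{2\pi\i(k\xi+l\eta)}$ vanishes on a set of positive measure in each period cell, hence vanishes identically; therefore $c_{k,l}=0$ for all $k,l\in\Z$.

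**Upgrading to $F=0$.** Once all $c_{k,l}=\langle F,\phi_{k,l}\rangle$ vanish, I would use the reproducing identity $F=F\odot K$ and Lemma~\ref{lem1} to convert the vanishing of $\langle F,\phi_{k,l}\rangle$ into the vanishing of integrals of $\hat F$ against a weight that does not degenerate. Concretely, writing $\langle F,\phi_{k,l}\rangle=\langle F\odot K,\phi_{k,l}\rangle$ and pairing on the Fourier side, the formula in Lemma~\ref{lem1} produces a factor $\mathrm{sinc}(\xi)$ in $\xi$ and, in the $\eta$-variable, the inner integral $\int_{\R}\hat F(\xi',\eta)\,\mathrm{sinc}(\xi')\mathrm{e}^{-2\pi\i\xi'\eta}\d\xi'$. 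Reinterpreting the vanishing of $\langle F,\phi_{k,l}\rangle$ for all $(k,l)$ as the vanishing of all Fourier coefficients of an $L^1_{\mathrm{loc}}$, doubly periodic function built from $\widehat{F\odot K}$ restricted to the unit cell $[-\frac12,\frac12]^2$, one concludes that this restriction is zero a.e. The supports in $\eta$ are contained in $[-\frac12,\frac12]$ already (from the $\chi_{[-\frac12,\frac12]}(\eta)$ in $\hat K$), so there is no aliasing to worry about there; the point is to show $\hat F\equiv 0$ on all of $\R^2$.

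**Main obstacle.** The delicate step is exactly the last one: concluding $\hat F\equiv 0$ from the vanishing of the "sampled" quantities. In the Shannon case the analogous argument was clean because $(\mathrm{sinc}\,\xi)^2$ is strictly positive on the relevant interval and $\hat F$ was compactly supported, so one divided and finished. Here $F$ need not be band-limited, $\mathrm{sinc}(\xi)$ has zeros at every nonzero integer, and the convolution structure in $\xi$ from Lemma~\ref{lem1} (an integral operator with kernel essentially $\mathrm{sinc}$) must be inverted or at least shown to be injective on the fibers. I expect one must argue fiberwise in $\eta$: for fixed $\eta\in(-\frac12,\frac12)$, the map $\hat F(\cdot,\eta)\mapsto \mathrm{sinc}(\xi)\mathrm{e}^{2\pi\i\xi\eta}\int\hat F(\xi',\eta)\mathrm{sinc}(\xi')\mathrm{e}^{-2\pi\i\xi'\eta}\d\xi'$ is, after the unimodular twists, a rank-one-type projection whose range is spanned by $\mathrm{sinc}$, and the reproducing condition $F=F\odot K$ forces $\hat F(\cdot,\eta)$ to lie in that range; combining this with the vanishing of all the $c_{k,l}$ pins down the single scalar degree of freedom to zero, giving $\hat F(\cdot,\eta)=0$ for a.e.\ $\eta$ and hence $F=0$. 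Making this fiberwise reduction rigorous — in particular justifying the interchange of integrals and the a.e.\ statements for merely $L_2$ data — is where the real work lies.
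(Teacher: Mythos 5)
Your first step --- recovering $\langle F,\phi_{k,l}\rangle=0$ from $J_\phi(F)=0$ by taking the Fourier transform of $\sum_{k,l}c_{k,l}K(\cdot+k,\cdot+l)$ and using that $\hat K$ from \eqref{kernel} is nonzero a.e.\ on a full period cell $[-\frac12,\frac12]^2$ --- is sound, and in fact it spells out a reduction that the paper only asserts. Just note that the correct justification is that the trigonometric series vanishes a.e.\ on an \emph{entire} fundamental domain; vanishing on a mere set of positive measure would not force a periodic $L_2$ function to be zero, so your phrasing of that step is not a valid principle even though the conclusion here is right.

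The genuine gap is in the second step, and you flag it yourself: you reduce correctly to showing that the scalar $A(\eta)=\int_\R\hat F(\xi',\eta)\,\mathrm{sinc}(\xi')\,\mathrm{e}^{-2\pi\i\xi'\eta}\,\d\xi'$ vanishes a.e., but you do not produce the argument, and the one mechanism you sketch fails. Your claim that ``the supports in $\eta$ are contained in $[-\frac12,\frac12]$ already'' is false: in the reproducing identity the factor $\chi_{[-\frac12,\frac12]}$ of $\hat K$ is evaluated at $x'+\eta$ inside the convolution (see the proof of Lemma~\ref{lem1}), so neither $\hat F$ nor $A$ is compactly supported in $\eta$, the double periodization of $\hat F\cdot(\mathrm{sinc}\otimes\mathrm{sinc})$ does alias in the $\eta$-variable, and you cannot read off its restriction to the unit cell from the vanishing of its Fourier coefficients. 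The missing idea, which is the paper's key device, is to integrate out $\xi$ first: substituting $\hat F(\xi,\eta)=\mathrm{sinc}(\xi)\mathrm{e}^{2\pi\i\xi\eta}A(\eta)$ into $\langle\hat F,\hat\phi_{k,l}\rangle$ and using $\int_\R\mathrm{sinc}^2(\xi)\,\mathrm{e}^{2\pi\i\xi(\eta-k)}\,\d\xi=M_2(\eta-k)$ converts the hypothesis into the orthogonality of $\mathrm{sinc}(\eta)A(\eta)$ to the Gabor system $\{M_2(\cdot-k)\mathrm{e}^{-2\pi\i l\cdot}\}_{k,l\in\Z}$; the completeness of this system in $L_2(\R)$ (its Zak transform vanishes only at a single point of the unit cell, so completeness holds even though, by Balian--Low, it is not a frame) gives $\mathrm{sinc}(\eta)A(\eta)=0$, hence $A=0$ a.e.\ and $\hat F=0$ by Lemma~\ref{lem1}. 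Without this, or an equivalent totality argument for the sampled system, the proof is not closed.
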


\begin{proof}
We show that $J_\phi (F) = 0$ if and only if $\langle F,\phi_{k,l} \rangle = 0$
for all $k,l \in \mathbb Z$.
Using Plancherel's theorem, we obtain
\begin{align}
0 &= \langle F, \phi_{k,l} \rangle
= 
\langle \widehat{F \odot K}, \hat \phi_{k,l} \rangle 
\\
&= 
\int_{\R^3}\mathrm{sinc} (\xi) \mathrm{e}^{2\pi \i \xi \eta }
\hat F(\xi',\eta) \mathrm{sinc} (\xi') \mathrm{e}^{-2\pi \i \xi' \eta } \, 
\mathrm{sinc} (\xi) \mathrm{e}^{-2\pi \i k \xi  }
\mathrm{sinc} (\eta) \mathrm{e}^{-2\pi \i l \eta  }\, \d \xi \d \eta \, \d \xi'.
\end{align}
{ Observe that}
\[
\int_{\R}  \mathrm{sinc} ^2 (\xi) \mathrm{e}^{2\pi \i \xi (\eta -k)}\, \d \xi 
= M_2(\eta - k),
\]
where $M_2$ denotes the centered cardinal $B$-spline of order 2.
Thus, we get
\begin{align}
0
&= \int_{\R}M_2(\eta - k)  
\mathrm{e}^{-2\pi \i l \eta } \; \mathrm{sinc} (\eta)
\Bigl(\int_{\R}\hat F(\xi',\eta) 
\mathrm{sinc} (\xi') \mathrm{e}^{-2\pi \i \xi' \eta }\,\d\xi' \Bigr)\, \d \eta.
\end{align}
Now $\{M_2(\eta - k) \mathrm{e}^{-2\pi \i l \eta }:k,l \in \mathbb Z\}$ 
is a Gabor frame in $L_2(\R)$ so that the above implies
\begin{align}
0
&= 
\mathrm{sinc} (\eta) \int_{\R}  \hat F(\xi',\eta) 
\mathrm{sinc} (\xi') \mathrm{e}^{-2\pi \i \xi' \eta }\,\d \xi'
\end{align}
and then
\[
0=\int_{\R} \hat F(\xi',\eta) 
\mathrm{sinc} (\xi') \mathrm{e}^{-2\pi \i \xi' \eta}\d \xi'  \quad \eta-a.e.
\]
Consequently, by Lemma \ref{lem1}, we have $\hat F(\xi,\eta) = 0$
and then $F(x,\omega) = 0$ a.e..
\end{proof}

The next step is to show the $L_p$-boundedness of the
$Q$-oscillation. We will follow closely the argument in Section.5.1.4,
i.e., we show the boundedness  for all functions in the reproducing
kernel space by means of Sobolev embeddings. We first observe
$$\frac{\partial K}{\partial\omega},\frac{\partial K}{\partial x}\in L_p(\RR^2)
	\quad\text{ for all }p>1.$$
This becomes clear when writing a more explicit expression for $K$. We have
$$
	h_x(t)=\chi_{[-\frac{1}{2},\frac{1}{2}]}(t)\chi_{[x-\frac{1}{2},x+\frac{1}{2}]}(t)
		=\begin{cases}
			0, &|x|>1\,,\\
			\chi_{[-\frac{1}{2},x+\frac{1}{2}]}(t), &-1\leq x<0,\\
			\chi_{[x-\frac{1}{2},\frac{1}{2}]}(t), &0\leq x\leq 1.
		\end{cases}
$$
From
$$
	\widehat\chi_{[a,b]}(\omega)=(b-a){\rm e}^{2\pi i\frac{b+a}{2}\omega}\sinc\bigl((b-a)\omega\bigr)
$$
this results in the explicit expression
\begin{align*}
	K(x,\omega)=\widehat{ h_x}(\omega)
		&=\begin{cases}
			0, &|x|>1\\
			{\rm e}^{2\pi i\frac{x}{2}\omega}\sinc\bigl((1+x)\omega\bigr), &-1\leq x<0\\
			{\rm e}^{2\pi i\frac{x}{2}\omega}\sinc\bigl((1-x)\omega\bigr), &0\leq x\leq 1
			\end{cases}\\
		&={\rm e}^{\pi i x\omega}\sinc\bigl((1-|x|)\omega\bigr)\chi_{[-1,1]}(x).
\end{align*}
Thus as a function of $\omega$ the kernel $K$ is a smooth function, whereas w.r.t. $x$ it is only Lipschitz. However, since $K$ and its weak derivatives are compactly supported w.r.t. $x$, this suffices to belong to $L_p$.

\begin{lemma}
	Young's inequality transfers to the $\odot$-product, i.e. we have
	$$\|F\odot G\|_{L_r}\lesssim\|F\|_{L_p}\|G\|_{L_q}$$
	for all $F\in L_p(\R^2)$ and $G\in L_q(\R^2)$, whenever $1/p+1/q=1+1/r$.
\end{lemma}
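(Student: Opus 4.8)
The plan is to reduce the claim to the classical Young inequality on $\R^2$ via a pointwise domination. The key observation is that the oscillatory factor $\mathrm{e}^{2\pi\i x'(\omega'-\omega)}$ in the definition of $\odot$ has modulus one, so that for measurable $F,G\colon\R^2\to\mathbb C$ one has, wherever the integral makes sense,
\[
\bigl|(F\odot G)(x,\omega)\bigr|
\;\le\;
\int_{\R^2}\bigl|F(x',\omega')\bigr|\,\bigl|G(x-x',\omega-\omega')\bigr|\,\d x'\,\d\omega'
\;=\;\bigl(|F|*|G|\bigr)(x,\omega),
\]
where $*$ is the ordinary convolution on $\R^2$.

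The first step is then to invoke the classical Young inequality on $\R^2$: for $1/p+1/q=1+1/r$, $F\in L_p(\R^2)$ and $G\in L_q(\R^2)$, the function $|F|*|G|$ lies in $L_r(\R^2)$ with $\bigl\||F|*|G|\bigr\|_{L_r}\le\|F\|_{L_p}\|G\|_{L_q}$. In particular $|F|*|G|$ is finite a.e., so by the bound above the integral defining $(F\odot G)(x,\omega)$ converges absolutely for a.e.\ $(x,\omega)$; joint measurability of the integrand on $\R^2\times\R^2$ together with Fubini then yields measurability of $F\odot G$. Combining, I get
\[
\|F\odot G\|_{L_r}\;\le\;\bigl\||F|*|G|\bigr\|_{L_r}\;\le\;\|F\|_{L_p}\,\|G\|_{L_q},
\]
which is the assertion, in fact with implicit constant $1$.

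I do not expect a genuine obstacle: the only point needing a line of care is the a.e.\ well-definedness and measurability of $F\odot G$, handled exactly as in the proof of the classical Young inequality. As an alternative route one could instead use the identity $jF*jG=j(F\odot G)$ recorded above, apply Young's inequality on the unimodular group $\mathbb H_r$, and use that $j$ preserves every $L_p$-norm (since $|jF(x,\omega,z)|=|F(x,\omega)|$ and $\mathbb T$ carries unit mass); but the direct pointwise argument is shorter and avoids invoking convolution inequalities on $\mathbb H_r$.
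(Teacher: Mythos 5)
Your proof is correct, but it takes a different route from the paper. The paper proves the lemma by passing through the reduced Heisenberg group: it uses the intertwining relation $j(F\odot G)=jF\ast jG$, the fact that $j$ is an isometry of $L_p(\R^2)$ into $L_p(\mathbb H_r)$ for every $p$, and Young's inequality for the group convolution on $\mathbb H_r$, writing $\|F\odot G\|_{L_r(\R^2)}=\|jF\ast jG\|_{L_r(\mathbb H_r)}\lesssim\|jF\|_{L_p(\mathbb H_r)}\|jG\|_{L_q(\mathbb H_r)}=\|F\|_{L_p(\R^2)}\|G\|_{L_q(\R^2)}$. You instead observe that the oscillatory factor $\mathrm{e}^{2\pi\i x'(\omega'-\omega)}$ is unimodular, so that $|F\odot G|\le|F|\ast|G|$ pointwise, and then invoke the classical Young inequality on $\R^2$. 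Your argument is more elementary (it never leaves the abelian setting and needs no convolution theory on $\mathbb H_r$), yields the explicit constant $1$, and settles the a.e.\ well-definedness of $F\odot G$ as a byproduct; the paper's argument is slightly less self-contained but keeps the structural identification of $\odot$ with group convolution on $\mathbb H_r$ in the foreground, which is the same mechanism the paper exploits elsewhere in this section. You correctly note the paper's route as an alternative, so nothing is missing.
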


\begin{proof}
	This follows immediately from the intertwining relation
	$$j(F\odot G)=jF\ast jG$$
	for the isometry $j:L_p(\RR^2)\to L_p(\mathbb{H}_r)$ and Young's inequality for the group
	$\mathbb{H}_r$. More precisely
	\begin{align*}
		\|F\odot G\|_{L_r(\RR^2)}
			&=\|j^{-1}(jF\ast jG)\|_{L_r(\RR^2)}
				=\|jF\ast jG\|_{L_r(\mathbb{H}_r)}\\
			&\lesssim\|jF\|_{L_p(\mathbb{H}_r)}\|jG\|_{L_q(\mathbb{H}_r)}
				=\|F\|_{L_p(\RR^2)}\|G\|_{L_q(\RR^2)}\,.
	\end{align*}
\end{proof}

\begin{lemma}
	For $F\in\mathcal{M}$ we have
	$$\frac{\partial F}{\partial x}=F\odot\frac{\partial K}{\partial x}\in L_p(\R^2)$$
	for all $p>1$ as well as
	$$\frac{\partial F}{\partial\omega}=\frac{\partial F}{\partial\omega}\odot K\,,$$
	and consequently $\frac{\partial F}{\partial\omega}\in\mathcal{M}$.
\end{lemma}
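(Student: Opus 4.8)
The plan is to establish the three assertions in turn; the algebra is short, and the real work lies in the analytic bookkeeping. For the first identity I would start from the reproducing relation $F=F\odot K$, valid because $F\in\mathcal M$, and look at the defining integral
\[
(F\odot K)(x,\omega)=\int_{\R^2}F(x',\omega')\,K(x-x',\omega-\omega')\,\mathrm{e}^{2\pi\i x'(\omega'-\omega)}\,\d x'\,\d\omega'.
\]
Here the first slot $x$ enters only through $K(x-x',\,\cdot\,)$, while the twisting phase $\mathrm{e}^{2\pi\i x'(\omega'-\omega)}$ is independent of $x$; hence differentiating in $x$ under the integral sign transfers $\partial/\partial x$ onto $K$ and yields $\partial F/\partial x=F\odot(\partial K/\partial x)$ (equivalently, one can read this off on the level of tempered distributions, so no domination argument is needed for the identity itself). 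To obtain $L_p$‑membership I would then feed this into the preceding lemma, Young's inequality for $\odot$: $\|F\odot(\partial K/\partial x)\|_{L_r}\lesssim\|F\|_{L_s}\,\|\partial K/\partial x\|_{L_q}$ whenever $1/s+1/q=1+1/r$. Given $r>1$, choose $s=q=\tfrac{2r}{r+1}\in(1,2)$; both factors on the right are then finite, since $F$ lies in $\bigcap_{p>1}L_p(\R^2)$ (the reproducing‑kernel space being contained in $Y$) and $\partial K/\partial x\in\bigcap_{q>1}L_q(\R^2)$ by the observation made just before the lemma. As $r>1$ was arbitrary, $\partial F/\partial x\in\bigcap_{p>1}L_p(\R^2)$.

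For the $\omega$‑derivative the twist $\mathrm{e}^{2\pi\i x'(\omega'-\omega)}$ now \emph{does} depend on $\omega$, so $\partial/\partial\omega$ cannot be moved directly onto $K$. The cleanest route is via Lemma~\ref{lem1}, which says that $F\in\mathcal M$ forces $\hat F(\xi,\eta)=\mathrm{sinc}(\xi)\,\mathrm{e}^{2\pi\i\xi\eta}\,g(\eta)$ with $g(\eta)=\int_{\R}\hat F(\xi',\eta)\,\mathrm{sinc}(\xi')\,\mathrm{e}^{-2\pi\i\xi'\eta}\,\d\xi'$. Since differentiating in $\omega$ multiplies $\hat F$ by $2\pi\i\eta$, one gets $\widehat{\partial F/\partial\omega}(\xi,\eta)=\mathrm{sinc}(\xi)\,\mathrm{e}^{2\pi\i\xi\eta}\,\bigl(2\pi\i\eta\,g(\eta)\bigr)$, again of the same shape; and because $\int_{\R}\mathrm{sinc}^2=1$, a direct computation shows that $2\pi\i\eta\,g(\eta)$ is exactly the function Lemma~\ref{lem1} reproduces from $\partial F/\partial\omega$, i.e. $(\partial F/\partial\omega)\odot K=\partial F/\partial\omega$, so $\partial F/\partial\omega\in\mathcal M$. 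A purely group‑side alternative is to differentiate $F\odot K$ in $\omega$ and integrate by parts in $\omega'$: the extra factor $-2\pi\i x'$ produced by the phase under $\partial/\partial\omega$ is precisely cancelled by the (boundary‑free) integration by parts, giving the commutation $\partial(F\odot K)/\partial\omega=(\partial F/\partial\omega)\odot K$; applied to $F=F\odot K$ this is the claim.

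The step I expect to be the real obstacle is not any of these formal identities but the surrounding analysis: justifying the differentiation under the integral sign and the integration by parts, and — before invoking Lemma~\ref{lem1} for the $\omega$‑part — checking that $\partial F/\partial x$ and $\partial F/\partial\omega$ are genuine $L_2$‑ (indeed $\bigcap_{p>1}L_p$‑) functions rather than mere distributions. Because $K$ and $\partial K/\partial x$ are only mildly integrable in the $\omega$‑variable (they do not lie in $L_1$), these points have to be handled by exploiting the compact support of $K$ in the first variable together with Young's inequality for $\odot$, so as to secure absolute convergence of the relevant integrals and the vanishing of the boundary terms at $\omega'=\pm\infty$; once that is in place the rest is routine.
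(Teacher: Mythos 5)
Your proof is correct. For $\partial F/\partial x$ you argue exactly as the paper does: differentiate $F=F\odot K$ under the integral (the twisting phase is independent of $x$, so the derivative falls on $K$) and conclude with Young's inequality for $\odot$ together with $\partial K/\partial x\in\bigcap_{q>1}L_q(\R^2)$. For $\partial F/\partial\omega$ the paper takes the route you only sketch as an alternative: after the change of variables
\[
(F\odot K)(x,\omega)=\int F(x-x',\omega-\omega')\,K(x',\omega')\,\mathrm{e}^{2\pi\i(x'-x)\omega'}\,\d x'\,\d\omega',
\]
the phase no longer depends on $\omega$, so $\partial/\partial\omega$ passes directly onto $F$ and $\partial F/\partial\omega=\partial(F\odot K)/\partial\omega=(\partial F/\partial\omega)\odot K$; no integration by parts and no Fourier analysis is needed. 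Your primary route through Lemma \ref{lem1} (differentiation in $\omega$ multiplies $\hat F$ by $2\pi\i\eta$, which preserves the shape $\mathrm{sinc}(\xi)\,\mathrm{e}^{2\pi\i\xi\eta}g(\eta)$ and hence the Fourier-side reproducing identity) is also valid, but it is the more expensive option: one must first know that $\partial F/\partial\omega$ is an object to which the computation behind Lemma \ref{lem1} applies, which is precisely the bookkeeping the change-of-variables argument sidesteps. One caveat, which applies to the paper's proof as much as to yours: Young's inequality with $F\in L_2$ and $\partial K/\partial x\in L_q$, $q>1$, only yields $\partial F/\partial x\in L_r$ for $r\geq 2$; to reach all $p>1$ you need $F\in L_s$ with $s$ close to $1$, and your appeal to $F\in\bigcap_{p>1}L_p(\R^2)$ tacitly assumes that the relevant $F$ lie in the Fr\'echet reproducing-kernel space $\mathcal{M}^Y$ rather than merely in the $L_2$-based space $\mathcal{M}$ as literally defined in this section.
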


\begin{proof}
	The first identity follows immediately from the reproducing
        property of $K$ 
\[ \frac{\partial}{\partial x} (F\odot G)= F\odot \frac{\partial G}{\partial x}\]
together with Young's
	inequality. The second identity follows from observing
	$$F\odot K(x,\omega)=\int F(x-x',\omega-\omega')K(x',\omega')
		e^{2\pi i (x'-x)\omega'}{\rm d}x'{\rm d}\omega',$$
	and once more Young's inequality.
\end{proof}

\begin{corollary}
	For $F\in\mathcal M$ we also have $\frac{\partial^2 F}{\partial x\partial\omega}\in L_p(\R^2)$ for all
	$p>1$.
\end{corollary}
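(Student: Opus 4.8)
The goal is to combine the two preceding facts—that $\partial F/\partial x \in L_p(\R^2)$ for all $p>1$ and that $\partial F/\partial\omega \in \mathcal M$—in order to control the mixed second derivative. The natural route is to apply the chain of identities already developed in the two lemmas above, but with $F$ replaced by $\partial F/\partial\omega$. Concretely, since $\partial F/\partial\omega \in \mathcal M$, the first lemma applies to it verbatim and yields
\[
\frac{\partial}{\partial x}\Bigl(\frac{\partial F}{\partial\omega}\Bigr)
= \frac{\partial F}{\partial\omega} \odot \frac{\partial K}{\partial x}.
\]
Here I would invoke equality of mixed partials (valid for the smooth-in-$\omega$, Lipschitz-in-$x$ functions in $\mathcal M$, or more safely in the sense of distributions, which is all that is needed) to identify the left-hand side with $\partial^2 F/\partial x\,\partial\omega$.

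\textbf{Key steps.} First I would record that $\partial K/\partial x \in L_q(\R^2)$ for all $q>1$, which is exactly the explicit computation carried out just before the lemmas (the kernel is compactly supported in $x$ and its weak $x$-derivative is a bounded, compactly supported function, hence lies in every $L_q$, $q>1$). Second, since $G := \partial F/\partial\omega \in \mathcal M \subset L_2(\R^2)$, and in fact—being in $\mathcal M$ and using the reproducing identity together with Young's inequality for $\odot$ as in the previous lemma—$G \in L_p(\R^2)$ for all $p>1$, I can feed $G \odot (\partial K/\partial x)$ into the $\odot$-Young inequality lemma: choosing $p,q>1$ with $1/p + 1/q = 1 + 1/r$, which is possible for every prescribed $r>1$, gives
\[
\Bigl\|\frac{\partial^2 F}{\partial x\,\partial\omega}\Bigr\|_{L_r}
= \Bigl\|\frac{\partial F}{\partial\omega} \odot \frac{\partial K}{\partial x}\Bigr\|_{L_r}
\lesssim \Bigl\|\frac{\partial F}{\partial\omega}\Bigr\|_{L_p}\,\Bigl\|\frac{\partial K}{\partial x}\Bigr\|_{L_q} < \infty.
\]
Since $r>1$ was arbitrary, the corollary follows.

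\textbf{Main obstacle.} The only genuinely delicate point is justifying the interchange of the two differentiations and the identification $\partial^2 F/(\partial x\,\partial\omega) = (\partial F/\partial\omega)\odot(\partial K/\partial x)$: the function $K$ is only Lipschitz in $x$, so one must work with weak derivatives throughout and check that differentiating the $\odot$-convolution under the integral is legitimate. This is handled by the same observation used in the previous lemma—$\partial_x(F\odot G) = F\odot \partial_x G$ holds as distributions whenever $\partial_x G$ exists in the weak sense and the resulting integrand is dominated—together with the fact that $\partial K/\partial x$ is a bounded function with compact support, so Fubini and dominated convergence apply without trouble. Everything else is a routine bookkeeping of Young exponents exactly as in equation~\eqref{useful3} of the Shannon section.
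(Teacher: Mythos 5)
Your argument is exactly the one the paper intends: the corollary is stated without proof precisely because it follows by applying the first assertion of the preceding lemma to $\frac{\partial F}{\partial\omega}$, which the second assertion has just placed in $\mathcal M$, so that $\frac{\partial^2 F}{\partial x\,\partial\omega}=\frac{\partial F}{\partial\omega}\odot\frac{\partial K}{\partial x}$ and the $\odot$-Young inequality finishes the job. Your additional remarks on weak derivatives and on the integrability of $\frac{\partial F}{\partial\omega}$ only make explicit what the paper leaves implicit, so the proposal is correct and follows the same route.
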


In short: While we no longer have the reproducing property for the $x$-derivatives, we nevertheless retain the needed integrability properties. Those results can be combined into

\begin{corollary}
	The space $\mathcal M$ embeds into $S^1_p W(\RR^2)$ for all $p>1$, the first order Sobolev space of
	dominating mixed smoothness.
\end{corollary}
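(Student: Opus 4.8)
The plan is to unwind the definition of the target space and then read off each condition from the preceding results. Recall that $F\in S^1_pW(\RR^2)$ exactly when $F$ and all of its mixed (weak) derivatives $\partial^{\alpha}F$ with $\alpha=(\alpha_1,\alpha_2)$, $0\le\alpha_1,\alpha_2\le 1$, belong to $L_p(\RR^2)$; in two variables and order one this is the list of four membership statements $F,\ \tfrac{\partial F}{\partial x},\ \tfrac{\partial F}{\partial\omega},\ \tfrac{\partial^2 F}{\partial x\,\partial\omega}\in L_p(\RR^2)$. Thus it suffices to check these four for an arbitrary $F\in\mathcal M$ and an arbitrary $p>1$, and three of the four are already in hand.

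Concretely I would argue as follows. The order-zero statement $F\in L_p(\RR^2)$ for all $p>1$ holds because $\mathcal M$ is the reproducing-kernel space $\mathcal M^{Y}$ of the coorbit construction with model space $Y=\bigcap_{p>1}L_p(\RR^2)$: since $K\in L_p(\RR^2)$ for every $p>1$, Theorem~\ref{Ycoorbit} identifies $\mathcal M$ with the closed subspace $\{F\in Y\mid F\odot K=F\}$ of $Y$, so automatically $F\in\bigcap_{p>1}L_p(\RR^2)$. The $x$-derivative is handled by the preceding Lemma: $\tfrac{\partial F}{\partial x}=F\odot\tfrac{\partial K}{\partial x}$ (interchange of $\partial_x$ with $\odot$ together with $F\odot K=F$), and this lies in $L_p(\RR^2)$ for all $p>1$ by Young's inequality for $\odot$, using $\tfrac{\partial K}{\partial x}\in L_q(\RR^2)$ for all $q>1$. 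For the $\omega$-derivative, the same Lemma gives $\tfrac{\partial F}{\partial\omega}=\tfrac{\partial F}{\partial\omega}\odot K$, hence $\tfrac{\partial F}{\partial\omega}\in\mathcal M$, and applying the order-zero statement to $\tfrac{\partial F}{\partial\omega}$ yields $\tfrac{\partial F}{\partial\omega}\in L_p(\RR^2)$ for all $p>1$. Finally, the mixed derivative $\tfrac{\partial^2F}{\partial x\,\partial\omega}\in L_p(\RR^2)$ for all $p>1$ is precisely the preceding Corollary (apply the $\partial_x$-identity to $\tfrac{\partial F}{\partial\omega}\in\mathcal M$ and Young once more). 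Combining the four memberships gives $F\in S^1_pW(\RR^2)$ for every $p>1$; continuity of the inclusion $\mathcal M\hookrightarrow S^1_pW(\RR^2)$ then follows along the same lines, each of the relevant $L_p$-norms being dominated by seminorms of $F$ through the Young estimates above (with a use of the closed graph theorem where the reproducing identity, rather than a one-sided convolution estimate, is invoked).

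I do not expect a genuine obstacle here: all of the analytic substance — transferring Young's inequality to the twisted convolution $\odot$ via the isometry $j$, commuting $\partial_x$ past $\odot$, and the explicit piecewise formula $K(x,\omega)={\rm e}^{\pi\i x\omega}\,\sinc\!\bigl((1-|x|)\omega\bigr)\,\chi_{[-1,1]}(x)$ from which $\tfrac{\partial K}{\partial x},\tfrac{\partial K}{\partial\omega}\in L_p(\RR^2)$ for $p>1$ — was carried out in the preceding lemmas, and this corollary is the bookkeeping step that repackages them under the heading ``dominating mixed smoothness''. The only point that genuinely uses the coorbit framework, rather than a one-line derivative identity, is the order-zero membership $F\in L_p(\RR^2)$ for $p$ close to $1$: it does not come out of the reproducing-property manipulations and must be drawn from the definition of $\mathcal M$ as the reproducing-kernel space $\mathcal M^{Y}$ of $\operatorname{Co}(Y)$ with $Y=\bigcap_{p>1}L_p(\RR^2)$.
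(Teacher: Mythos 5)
Your proof is correct and is essentially the paper's argument: the paper offers no separate proof of this corollary beyond the remark that the preceding results combine, and your four membership checks ($F$, $\tfrac{\partial F}{\partial x}$, $\tfrac{\partial F}{\partial\omega}$, $\tfrac{\partial^2 F}{\partial x\,\partial\omega}$ in $L_p(\RR^2)$ for all $p>1$, drawn respectively from the definition of $\mathcal M$, the derivative lemma with Young's inequality for $\odot$, and the preceding corollary) are exactly that combination. Your observation that the order-zero membership for $1<p<2$ must come from reading $\mathcal M$ as the reproducing-kernel space $\mathcal M^{Y}$ with $Y=\bigcap_{p>1}L_p(\RR^2)$ (rather than from the $L_2$-based definition $\mathcal M=\{F\in L_2:F\odot K=F\}$ given in the text, from which Young's inequality only yields $F\in L_p$ for $p\ge 2$) is a point on which you are more careful than the paper.
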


For spaces of dominating mixed smoothness, the counterpart of the classical Sobolev embedding can be formulated as
\[
	S^s_p W(\RR^d)\hookrightarrow L_\infty(\RR^d)\quad\text{provided}\quad s>1/p\,,
\]
see, e.g., \cite{ST}, Chapter 2.4  for details.  Note that the condition is the same as for the univariate embedding. Ultimately, we now conclude
\begin{align*}
	\int\left(\|F\|_{L_{\infty}(Q\cdot(x,\omega))}\right)^p {\rm d}x{\rm d}\omega
		&\lesssim
			\int\left(\|F\|_{S^1_p W(Q\cdot(x,\omega))}\right)^p {\rm d}x{\rm d}\omega\\
		&\lesssim
			\int\left(\left\|\frac{\partial F}{\partial x}\right\|^p_{L_p(Q\cdot(x,\omega))}
				+\left\|\frac{\partial F}{\partial\omega}\right\|^p_{L_p(Q\cdot(x,\omega))}
				+\left\|\frac{\partial^2 F}{\partial x\partial\omega}\right\|^p_{L_p(Q\cdot(x,\omega))}
				\right){\rm d}x{\rm d}\omega \\
		&=\int\int_{Q\cdot(x,\omega)}\left|\frac{\partial F}{\partial x}(u,\eta)\right|^p
			{\rm d}u {\rm d}\eta {\rm d}x{\rm d}\omega+\cdots \\
		&=\int\int_{Q}\left|\frac{\partial F}{\partial x}(u+x,\eta+\omega)\right|^p
			{\rm d}u {\rm d}\eta {\rm d}x{\rm d}\omega+\cdots \\
		&=\int_{Q}\int_{\RR^2}\left|\frac{\partial F}{\partial x}F(u+x,\eta+\omega)\right|^p
			{\rm d}x{\rm d}\omega 	{\rm d}u {\rm d}\eta \\
		&=\int_{Q}\|F\|^p_{S^1_p W(\RR^2)}	{\rm d}u {\rm d}\eta 
			=\|F\|^p_{S^1_p W(\RR^2)} 	{\rm d}u {\rm d}\eta 
\int_{Q} 	{\rm d}u {\rm d}\eta <\infty~.
\end{align*}


\section{Appendix}

\subsection{Generalized Weighted Young Inequality} \label{appendix1}

\begin{lemma}\label{LemmageneralizedYoung}
Let $w$ be the control weight and suppose that $m$ is $w$-moderate. Then 
\begin{equation}\label{generalizedYoung}
\norm{H*F}_{L_{r,m}}\lesssim \norm{H}_{L_{p,m}}\norm{F}_{L_{q,w}},~~~ \text{where}~~~~~~~1+\frac{1}{r}=\frac{1}{q}+\frac{1}{p}~.
\end{equation}
\end{lemma}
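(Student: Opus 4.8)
The plan is to reduce everything, by a one-line manipulation of the weights, to the unweighted Young convolution inequality on $G$. Since $\abs{H\ast F}\le\abs{H}\ast\abs{F}$ pointwise, it suffices to treat $H,F\ge 0$. For such $H,F$ and every $x\in G$ I would factor $x=y\,(y^{-1}x)$ and invoke the $w$-moderateness bound \eqref{eq:ctrl2}, namely $m(x)=m\big(y\,(y^{-1}x)\big)\le m(y)\,w(y^{-1}x)$, to obtain the pointwise estimate
\[ m(x)\,(H\ast F)(x)=\int_G m(x)\,H(y)\,F(y^{-1}x)\,\d y\le\int_G\big(m(y)H(y)\big)\big(w(y^{-1}x)F(y^{-1}x)\big)\,\d y=\big((mH)\ast(wF)\big)(x). \]
Taking $L_r(G)$-norms on both sides then gives $\norm{H\ast F}_{L_{r,m}}\le\norm{(mH)\ast(wF)}_{L_r}$.

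The second and final step is to apply the classical Young inequality on the group: for $a,b\ge 0$ and exponents with $1+1/r=1/p+1/q$ one has $\norm{a\ast b}_{L_r}\lesssim\norm{a}_{L_p}\norm{b}_{L_q}$ (with constant $1$ when $G$ is unimodular, which in particular covers the two examples $G=\R$ and $G=\mathbb H_r$ of Section~\ref{sec:ex}; in the general locally compact case it is the standard convolution estimate obtained by interpolating the endpoint bounds). Applying this with $a=mH$ and $b=wF$ — whose $L_p$- resp.\ $L_q$-norms equal the right-hand side of \eqref{generalizedYoung} and are finite by hypothesis — yields
\[ \norm{(mH)\ast(wF)}_{L_r}\lesssim\norm{mH}_{L_p}\,\norm{wF}_{L_q}=\norm{H}_{L_{p,m}}\,\norm{F}_{L_{q,w}}, \]
and chaining this with the previous display proves \eqref{generalizedYoung}. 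Run on $\abs{H},\abs{F}$, the same chain of inequalities also shows that the integral defining $(H\ast F)(x)$ converges absolutely for almost every $x$, so the convolution is genuinely well defined under the stated integrability.

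I expect the only non-routine ingredient to be the invocation of the unweighted Young inequality on $G$; everything else is the bookkeeping pointwise weight estimate, which uses \eqref{eq:ctrl2} exactly once and, notably, neither the sub-multiplicativity \eqref{eq:ControlWeightSubmultiplicative} nor the symmetry \eqref{eq:ControlWeightSymmetric} of the control weight $w$. It is precisely the asymmetry built into $w$-moderateness — $m$ controllable "from the right" by $w$ in \eqref{eq:ctrl2} — that lets the output weight $m$ be redistributed as weight $m$ on the first convolution factor and weight $w$ on the second, which is exactly why $H$ is measured in $L_{p,m}$ while $F$ is measured in $L_{q,w}$.
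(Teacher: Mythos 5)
Your proof is correct and follows essentially the same route as the paper's: redistribute the output weight pointwise via the $w$-moderateness estimate $m(x)\le m(y)\,w(y^{-1}x)$ so that $m\,(H\ast F)\le (mH)\ast(wF)$, then invoke the unweighted Young inequality. The only cosmetic difference is that the paper additionally passes through the symmetry $w(g^{-1}h)=w(h^{-1}g)$ because of how it writes the convolution integrand, whereas you correctly observe that \eqref{eq:ctrl2} alone suffices; like the paper, you take the unweighted Young inequality on $G$ as given (which is fine for the unimodular groups appearing in Section~\ref{sec:ex}).
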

\begin{proof}
The $w$-moderateness of $m$ implies
$$m(h) = m(gg^{-1}h) \lesssim m(g)w(g^{-1}h)=m(g)w(h^{-1}g)~.$$
Therefore, it follows
\begin{eqnarray*}
\norm{H*F}_{L_{r,m}} &=& \left(\int_G |(H*F)(h)|^r m(h)^r \d h\right)^{1/r}\\
&=&\left(\int_G \left|\int_G H(g)F(h^{-1}g) \d g\right|^r m(h)^r\d h\right)^{1/r}\\
&=&\left(\int_G \left|\int_G H(g)F(h^{-1}g) m(h) \d g\right|^r  \d h\right)^{1/r}\\
&\lesssim&  \left(\int_G \left|\int_G w(h^{-1}g)F(h^{-1}g)H(g) m(g) \d g\right|^r  \d h\right)^{1/r}\\
&=& \left(\int_G |\left((m\cdot H)*(w\cdot F)\right)(h)|^r  \d h\right)^{1/r}\\
&\lesssim& \norm{w\cdot F}_{L_{q}} \norm{m\cdot H}_{L_{p}}
= \norm{H}_{L_{p,m}} \norm{F}_{L_{q,w}}.
\end{eqnarray*}
\end{proof}

\subsection{$L_p$ Estimates of the Shannon Kernel and its Derivatives} \label{appendix2}
{ In this section we prove that the all the  derivatives of the  Shannon kernel 
$x\mapsto 2\omega\sinc(2\omega x)$ are in all $L_p$-spaces, for $p>1$. For simplicity, we write
\[
K(x) = \frac{\sin x}{x}
\]
and prove the statement for $K$, which is obviously equivalent.}
We show by induction that
\begin{equation}\label{eq_diff_sinc}
\frac{\d ^n}{\d x^n}K(x) = x^{-2^n}\sum_{k=0}^{2^n-1}x^k p_{k,n}(x)~,
\end{equation}
where the $p_{k,n}$ are  trigonometric polynomials. { The case $n=1$ is true because} 
\[
\frac{\d }{\d x}K = \frac{x\cos x-\sin x}{x^2}.
\]
Suppose that (\ref{eq_diff_sinc}) holds for some $n$. Then
\begin{eqnarray}
\frac{\d ^{n+1}}{\d x^{n+1}}K &=& \frac{\d }{\d x}\left( \frac{\d ^n}{\d x^n}\right) = 
\frac{\d }{\d x}\left( x^{-2^n}\sum_{k=0}^{2^n-1}x^k p_{k,n}(x) \right)\\
&=& x^{-2^n}\left(\sum_{k=0}^{2^n-1}x^k p'_{k,n}(x)+\sum_{k=1}^{2^n-1}kx^{k-1} p_{k,n}(x)\right)-2^nx^{-2^n-1}\sum_{k=0}^{2^n-1}x^k p_{k,n}(x)\\
&=& \frac{x^{2^n}\left(\sum_{k=0}^{2^n-1}x^k p'_{k,n}(x)+\sum_{k=1}^{2^n-1}kx^{k-1} p_{k,n}(x)-2^n\sum_{k=0}^{2^n-1}x^{k-1} p_{k,n}(x)\right)}{\left(x^{2^n}\right)^2}\\
&=& \frac{\sum_{k=0}^{2^n-1}x^{2^n+k} p'_{k,n}(x)+\sum_{k=1}^{2^n-1}kx^{2^n+k-1} p_{k,n}(x)-2^n\sum_{k=0}^{2^n-1}x^{2^n+k-1} p_{k,n}(x)}{\left(x^{2^n}\right)^2}\\
&=& \frac{\sum_{k=0}^{2^{n+1}-1}x^{k} p_{k,n+1}(x)}{x^{2^{n+1}}}~.
\end{eqnarray}
Hence, (\ref{eq_diff_sinc}) is proved. 
Since the  { powers of $x$ that appear in  the  just derived expressions of $K^{(n)}(x)$ are $x^{-\ell}$ with $\ell\geq 1$,  it follows for } $|x|\geq C$ that
$$\left| \frac{\d ^n}{\d x^n}K\right| =\left|
 \frac{1}{x}  \sum_{k=0}^{2^n-1}x^{k-(2^n-1)} p_{k,n}(x)\right|\leq
 \left|\frac{1}{x}\right|\underbrace{\left| \sum_{k=0}^{2^n-1}x^{k-(2^n-1)} p_{k,n}(x)\right|}_{\leq C'}$$
 and therefore
 $$\left\|\frac{\d ^n}{\d x^n}K\right\|_{L_p}\leq C_p~~,~~~~p>1.$$
A yet more explicit formula holds. Indeed, from the product rule
$$(uv)^{(n)} = \sum_{k=0}^n \left(\begin{array}{c} n\\ k\end{array}\right)u^{(k)}v^{(n-k)},$$
{ choosing}  $u(x)=\sin(x)$ and $v(x) = x^{-1}$ 
it follows from $(v(x))^{(n)} = (-1)^n n! x^{-(n+1)}$ that
\[
\left(\frac{\sin x}{x}\right)^{(n)} = x^{-n} \sum_{k=0}^n \frac{n!}{k!}(-1)^{n-k}(\sin(x))^{(k)} x^{k-1}.
\]

\subsection{A Building Block for the Continuity  of $J_\phi^{-1}$}\label{boundedness}

We have to bound \eqref{dickeresBrett}. To this end, we split the $L_t$-norm as follows
\begin{eqnarray*}
J &=& \left\|\cF^{-1}
\left(\left(\chi_{[-\omega,\omega]}|\widehat{\phi_{\frac{1}{2\omega}}}|\right)^{-2}\right)\right\|^t_{L_t}
=       \underbrace{\int_{-\varepsilon}^\varepsilon | \cF^{-1} 
\left(\left(\chi_{[-\omega,\omega]}|\widehat{\phi_{\frac{1}{2\omega}}}|\right)^{-2}(x)\right)|^t \d x}_{=:J_1}                                                                                                                                            \\
&&     +   \underbrace{\int_{-\infty}^{-\varepsilon}| \cF^{-1} 
\left(\left(\chi_{[-\omega,\omega]}|\widehat{\phi_{\frac{1}{2\omega}}}|\right)^{-2}(x)\right)|^t \d x}_{=:J_2}                                                                                                                                                 +   \underbrace{\int_\varepsilon^\infty | \cF^{-1}
\left(\left(\chi_{[-\omega,\omega]}|\widehat{\phi_{\frac{1}{2\omega}}}|\right)^{-2}(x)\right)|^t \d x}_{=:J_3}                                                                                                                                                 ~.
\end{eqnarray*}
First, we consider 
$$\cF^{-1}
\left(\left(\chi_{[-\omega,\omega]}|\widehat{\phi_{\frac{1}{2\omega}}}|\right)^{-2}\right)(x) =
\int_{-\omega}^\omega |\widehat{\phi_{\frac{1}{2\omega}}}(\xi)|^{-2}\,{\rm e}^{2\pi i x\xi}\d \xi$$
and with
$$|\widehat{\phi_{\frac{1}{2\omega}}}(\xi)|^{-2}\big\vert_{[-\omega,\omega]}=|1/(2\omega)\,\sinc^2\left(\xi/(2\omega)\right)|^{-2}\big\vert_{[-\omega,\omega]}\leq (2\omega)^2\sinc^{-4}(1/2) = \frac{\omega^2 \pi^4}{4}$$
we obtain
$$J_1 \le \int_{-\varepsilon}^\varepsilon \left| \int_{-\omega}^\omega
\frac{\omega^2 \pi^4}{4}\d \xi \right|^t \d x = 2\varepsilon \left(\frac{\omega^3 \pi^4}{2}\right)^t~~.$$
To estimate $J_2$ we apply integration by parts and obtain 
\begin{eqnarray*}
\cF^{-1}
\left(\left(\chi_{[-\omega,\omega]}|\widehat{\phi_{\frac{1}{2\omega}}}|\right)^{-2}\right)(x) &=&
\int_{-\omega}^\omega \underbrace{|\widehat{\phi_{\frac{1}{2\omega}}}(\xi)|^{-2}}_{u(\xi)}
\underbrace{\,{\rm e}^{2\pi i x\xi}}_{v'(\xi)}\d \xi\\
&=& \frac{1}{2\pi i x}\left\{ |\widehat{\phi_{\frac{1}{2\omega}}}(\xi)|^{-2}\,{\rm e}^{2\pi i x\xi}\bigg\vert_{-\omega}^\omega
+ 2 \int_{-\omega}^\omega \frac{\mathfrak{Re}(\widehat{\phi_{\frac{1}{2\omega}}}'\,\overline{\widehat{\phi_{\frac{1}{2\omega}}}})}
{|\widehat{\phi_{\frac{1}{2\omega}}}|^4} 
\,{\rm e}^{2\pi i x\xi} \d \xi\right\}\\
&=& \frac{1}{2\pi i x}\left\{ \frac{\omega^2\pi^4}{2}2i\sin(2\pi \omega x)
+ 2 \int_{-\omega}^\omega 
\underbrace{\frac{\mathfrak{Re}(\widehat{\phi_{\frac{1}{2\omega}}}'\,\overline{\widehat{\phi_{\frac{1}{2\omega}}}})}
{|\widehat{\phi_{\frac{1}{2\omega}}}|^4} }_{\leq C<\infty} 
\,{\rm e}^{2\pi i x\xi} \d \xi\right\}\\
&\leq& \frac{1}{\pi |x|}
\left\{ \frac{\omega^2\pi^4}{2}
+ 2\omega C\right\}
\end{eqnarray*}
resulting in 
\begin{eqnarray*}
J_2 &\leq&  \left( \frac{\omega^2\pi^2}{2}
+ \frac{2\omega C}{\pi}\right)^t\int_{-\infty}^{- \varepsilon}\frac{1}{|x|^t} \d x = 
 \left( \frac{\omega^2\pi^2}{2}
+ \frac{2\omega C}{\pi}\right)^t \frac{1}{t-1}\frac{1}{\varepsilon^{t-1}}~.
\end{eqnarray*}
As $J_3$ is treated analougsly, we finally obtain
$$J \leq 2\varepsilon \left(\frac{\omega^3 \pi^4}{2}\right)^t + 2 
 \left( \frac{\omega^2\pi^2}{2}
+ \frac{2\omega C}{\pi}\right)^t \frac{1}{t-1}\frac{1}{\varepsilon^{t-1}} < \infty$$
and we are done.
\subsection*{Acknowledgements}
The research by EDV has been supported by the MIUR Grant PRIN
202244A7YL and  the MUR PNRR project PE0000013 CUP J53C22003010006
’Future Artificial Intelligence Research (FAIR)’. The research by EDV
and FDM has been
supported by the MUR Excellence Department Project awarded to
Dipartimento di Matematica, Universit\`a di Genova, CUP
D33C23001110001. EDV is a member of the Gruppo Nazionale per l’Analisi 
Matematica, la Probabilit\`a e le loro Applicazioni (GNAMPA) of the Istituto Nazionale di Alta Matematica (INdAM). 
GS acknowledges funding within the DFG Excellence Cluster MATH+.

\end{document}